\documentclass{article}%
\usepackage{amsfonts}
\usepackage{amsmath}%
\setcounter{MaxMatrixCols}{30}%
\usepackage{amssymb}%
\usepackage{graphicx}
%TCIDATA{OutputFilter=latex2.dll}
%TCIDATA{Version=5.00.0.2552}
%TCIDATA{CSTFile=40 LaTeX article.cst}
%TCIDATA{Created=Friday, July 25, 2014 11:42:38}
%TCIDATA{LastRevised=Monday, February 02, 2015 11:01:02}
%TCIDATA{<META NAME="GraphicsSave" CONTENT="32">}
%TCIDATA{<META NAME="SaveForMode" CONTENT="1">}
%TCIDATA{<META NAME="DocumentShell" CONTENT="Standard LaTeX\Blank - Standard LaTeX Article">}
\newtheorem{theorem}{Theorem}
\newtheorem{acknowledgement}[theorem]{Acknowledgement}

\newtheorem{corollary}[theorem]{Corollary}

\newtheorem{definition}[theorem]{Definition}

\newtheorem{proposition}[theorem]{Proposition}

\newenvironment{proof}[1][Proof]{\noindent\textbf{#1.} }{\ \rule{0.5em}{0.5em}}
\begin{document}

\title{\parbox{\linewidth}{\footnotesize\noindent } New properties of prolongations
of Linear connections on Weil bundles}
\author{B. V. NKOU\thanks{{\footnotesize vannborhen@yahoo.fr}} , B.G.R.
BOSSOTO\thanks{{\footnotesize bossotob@yahoo.fr}}, E.
OKASSA\thanks{{\footnotesize eugeneokassa@yahoo.fr}} ~}
\date{}
\maketitle

\begin{abstract}
Let $M$ be a paracompact smooth manifold, $A$ a Weil algebra and $M^{A}$ the
associated Weil bundle. If $\nabla$ is a linear connection on $M$, we give
equivalent definition and the properties of the prolongation $\nabla^{A}$ to
$M^{A}$ equivalent to the prolongation defined by Morimoto. When
$(M,\mathrm{g})$ is a pseudo-riemannian manifold, we show that the symmetric
tensor $\mathrm{g}^{A}$ of type $(0,2)$ defined by Okassa is nondegenerated.
At the end, we show that , if $\nabla$ is a Levi-Civita connection on
$(M,\mathrm{g})$, then $\nabla^{A}$ is torsion-free and $\mathrm{g}^{A}$ is
parallel with respect to $\nabla^{A}$.

\end{abstract}

\section{Introduction}

We recall that, in what follows we denote $A$, a local algebra (in the sense
of Andr\'{e} Weil) or simply Weil algebra, $M$ a smooth manifold, $C^{\infty
}(M)$ algebra of smooth functions on $M$ and $M^{A}$ the manifold of
infinitely near points of kind $A$ \cite{wei}. The triplet $(M^{A},\pi,M)$ is
a bundle called bundle of infinitely near points or simply Weil
bundle.\newline If $f:M\longrightarrow\mathbb{R}$ is a smooth function then
the application
\[
f^{A}:M^{A}\longrightarrow A,\xi\longmapsto\xi(f)
\]
is also a smooth function . The set, $C^{\infty}(M^{A},A)$ of smooth functions
on $M^{A}$ with values on $A,$ is a commutative algebra over $A$ with unit and
the application
\[
C^{\infty}(M)\longrightarrow C^{\infty}(M^{A},A),f\longmapsto f^{A}%
\]
is an injective homomorphism of algebras. Then, we have:
\begin{equation}
(f+g)^{A}=f^{A}+g^{A}\text{; }(\lambda\cdot f)^{A}=\lambda\cdot f^{A}%
\text{;}(f\cdot g)^{A}=f^{A}\cdot g^{A}.\nonumber
\end{equation}
The map
\[
C^{\infty}(M^{A})\times A\longrightarrow C^{\infty}(M^{A},A),(F,a)\longmapsto
F\cdot a:\xi\longmapsto F(\xi)\cdot a
\]
is bilinear and induces one and only one linear map
\[
\sigma:C^{\infty}(M^{A})\otimes A\longrightarrow C^{\infty}(M^{A},A).
\]
When $(a_{\alpha})_{\alpha=1,2,...,\dim A}$ is a basis of $A$ and when
$(a_{\alpha}^{\ast})_{\alpha=1,2,...,\dim A}$ is a dual basis of the basis
$(a_{\alpha})_{\alpha=1,2,...,\dim A}$, the application
\[
\sigma^{-1}:C^{\infty}(M^{A},A)\longrightarrow A\otimes C^{\infty}%
(M^{A}),\varphi\longmapsto\sum_{\alpha=1}^{\dim A}a_{\alpha}\otimes(a_{\alpha
}^{\ast}\circ\varphi)
\]
is an isomorphism of $A$-algebras. That isomorphism does not depend of a
choisen basis and the application
\[
\gamma:C^{\infty}(M)\longrightarrow A\otimes C^{\infty}(M^{A}),f\longmapsto
\sigma^{-1}(f^{A})\text{,}%
\]
is a homomorphism of algebras.\newline If $(U,\varphi)$ is a local chart of
$M$ with coordinate system $(x_{1},\,...,\,x_{n})$, the map
\[
\varphi^{A}:U^{A}\longrightarrow A^{n},\xi\longmapsto(\xi(x_{1}),\,...,\,\xi
(x_{n}))
\]
is a bijection from $U^{A}$ onto an open set of $A^{n}$. In addition, if
$(U_{i},\varphi_{i})_{i\in I}$ is an atlas of $M^{A}$, then $(U_{i}%
^{A},\varphi_{i}^{A})_{i\in I}$ is also an atlas of $M^{A}$ \cite{bo2}.

\subsection{Vector fields on $M^{A}$}

In \cite{nbo}, we gave another characterization of a vector field on $M^{A}$
through the above theorem and we also give a writing of a vector field on
$M^{A}$, in coordinate neighborhood system.

Thus,

\begin{theorem}
The following assertions are equivalent:

\begin{enumerate}
\item A vector field on $M^{A}$ is a differentiable section of the tangent
bundle $(TM^{A},\pi_{M^{A}},M^{A})$.

\item A vector field on $M^{A}$ is a derivation of $C^{\infty}(M^{A})$.

\item A vector field on $M^{A}$ is a derivation of $C^{\infty}(M^{A},A)$ which
is $A$-linear.

\item A vector field on $M^{A}$ is a linear map $X:C^{\infty}%
(M)\longrightarrow C^{\infty}(M^{A},A)$ such that%
\[
X(f\cdot g)=X(f)\cdot g^{A}+f^{A}\cdot X(g),\quad\text{for any}\,f,g\in
C^{\infty}(M)\text{.}%
\]

\end{enumerate}
\end{theorem}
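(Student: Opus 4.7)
The plan is to establish the chain of equivalences $(1)\Leftrightarrow(2)\Leftrightarrow(3)\Leftrightarrow(4)$, using the isomorphism $\sigma:C^{\infty}(M^{A})\otimes A\to C^{\infty}(M^{A},A)$ as the main bridge between derivations on $C^{\infty}(M^{A})$ and $A$-linear derivations on $C^{\infty}(M^{A},A)$, and using the fact that $f\mapsto f^{A}$ is an injective algebra homomorphism $C^{\infty}(M)\hookrightarrow C^{\infty}(M^{A},A)$ to reduce from the target $C^{\infty}(M^{A},A)$ to the source $C^{\infty}(M)$.

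For $(1)\Leftrightarrow(2)$ I would simply invoke the standard identification between smooth sections of the tangent bundle of a smooth manifold and derivations of its algebra of smooth functions, applied to the manifold $M^{A}$. For $(2)\Leftrightarrow(3)$ the idea is: given a derivation $D$ of $C^{\infty}(M^{A})$, extend it $A$-linearly to $C^{\infty}(M^{A})\otimes A$ by $D\otimes\mathrm{id}_{A}$ and transport along the isomorphism $\sigma$ to obtain an $A$-linear derivation $\tilde{D}$ of $C^{\infty}(M^{A},A)$; conversely, given an $A$-linear derivation $\tilde{D}$ of $C^{\infty}(M^{A},A)$, restrict to the image of $C^{\infty}(M^{A})\hookrightarrow C^{\infty}(M^{A},A)$, $F\mapsto F\cdot 1_{A}$, which sits in the kernel of the $A$-module structure in the expected way. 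The Leibniz and $A$-linearity compatibility is routine and follows by tracing definitions through $\sigma$.

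The nontrivial part is $(3)\Leftrightarrow(4)$. From $(3)$ to $(4)$ is easy: precompose the $A$-linear derivation $\tilde{X}$ of $C^{\infty}(M^{A},A)$ with the algebra homomorphism $C^{\infty}(M)\to C^{\infty}(M^{A},A)$, $f\mapsto f^{A}$; the Leibniz rule $\tilde{X}(f^{A}g^{A})=\tilde{X}((fg)^{A})$ together with $(fg)^{A}=f^{A}g^{A}$ yields the required identity $X(fg)=X(f)\cdot g^{A}+f^{A}\cdot X(g)$. The reverse implication is the main obstacle: given only $X:C^{\infty}(M)\to C^{\infty}(M^{A},A)$ with the stated Leibniz rule, one must construct a canonical $A$-linear derivation $\tilde{X}$ of $C^{\infty}(M^{A},A)$ whose pullback along $f\mapsto f^{A}$ is $X$. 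My approach would be to work in a local chart $(U,\varphi)$ with coordinates $(x_{1},\dots,x_{n})$, note that by the paragraph on $\varphi^{A}$ the functions $x_{i}^{A}$ generate $C^{\infty}(U^{A},A)$ as an $A$-algebra (after suitable completion along the nilpotent directions of $A$), define $\tilde{X}$ on the generators by $\tilde{X}(x_{i}^{A})=X(x_{i})$, extend $A$-linearly and by the Leibniz rule, and then check independence of the chart using how $X$ transforms under a change of coordinates—this is exactly where the hypothesis $X(fg)=X(f)g^{A}+f^{A}X(g)$ is used, since it forces the chain rule on compositions with coordinate change maps.

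Finally, to close the loop, the $A$-linear derivation produced in step $(4)\Rightarrow(3)$ corresponds via $\sigma^{-1}$ to a derivation of $C^{\infty}(M^{A})$, hence to a smooth vector field on $M^{A}$, giving $(3)\Rightarrow(2)\Rightarrow(1)$. The expected main difficulty is the well-definedness of the local extension in $(4)\Rightarrow(3)$ and the verification that the locally defined $\tilde{X}$ patches into a global $A$-linear derivation; this should be handled by the gluing properties of the atlas $(U_{i}^{A},\varphi_{i}^{A})_{i\in I}$ recalled at the end of the introduction.
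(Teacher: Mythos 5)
First, a point of comparison: the paper itself contains no proof of this theorem. It is recalled in the introduction as a result from \cite{nbo}, so there is no in-paper argument to measure your outline against; it has to be judged on its own. On those terms there is a genuine gap, concentrated precisely in the two steps you flag as needing the algebra $C^{\infty}(M^{A},A)$.

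The critical false claim is in your $(4)\Rightarrow(3)$ step: the functions $x_{i}^{A}$ (more generally the $f^{A}$, $f\in C^{\infty}(U)$) do \emph{not} generate $C^{\infty}(U^{A},A)$ as an $A$-algebra, not even densely. Take $A=\mathbb{R}[\epsilon]/(\epsilon^{2})$ and $M=\mathbb{R}$, so that $M^{A}=T\mathbb{R}$ with coordinates $(x,\dot{x})$ and $f^{A}=f(x)+\epsilon f'(x)\dot{x}$. Every element of the $A$-subalgebra generated by the $f^{A}$ has $\epsilon^{0}$-component constant along the fibres, so the function $\dot{x}$ lies outside the closure of that subalgebra. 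Consequently an $A$-linear derivation of $C^{\infty}(M^{A},A)$ is \emph{not} determined by its values on the $x_{i}^{A}$, and ``define $\tilde{X}$ on the generators and extend by Leibniz'' neither yields a unique extension nor obviously any extension at all: the two distinct $A$-linear derivations $\epsilon\,\partial/\partial x$ (the $A$-linear extension of $F\mapsto\epsilon\,\partial F/\partial x$) and $\partial/\partial\dot{x}$ agree on every $f^{A}$, both sending it to $\epsilon f'(x)$. The same example undermines your $(2)\Leftrightarrow(3)$: the map $D\mapsto\sigma\circ(D\otimes\mathrm{id}_{A})\circ\sigma^{-1}$ injects $Der_{\mathbb{R}}[C^{\infty}(M^{A})]$ into $Der_{A}[C^{\infty}(M^{A},A)]\cong Der_{\mathbb{R}}[C^{\infty}(M^{A})]\otimes A$, which is strictly larger when $\dim A>1$, and your proposed inverse (restriction to $C^{\infty}(M^{A})\cdot 1_{A}$) does not take values in $C^{\infty}(M^{A})$. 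What actually makes the theorem work, and what your outline never invokes, is the canonical $A$-module structure on each tangent space $T_{\xi}M^{A}$, for which $\left(\partial/\partial x_{i}^{A}|_{\xi}\right)_{i=1,\dots,n}$ is an $A$-basis and each $d(f^{A})_{\xi}$ is $A$-linear. The workable route is to prove $(1)\Leftrightarrow(4)$ directly --- a section $X$ gives $f\mapsto d(f^{A})(X)$, and conversely a map as in $(4)$ is local, hence determined over $U$ by the $n$ elements $X(x_{i})\in C^{\infty}(U^{A},A)$, which define the section $\xi\mapsto\sum_{i=1}^{n}X(x_{i})(\xi)\cdot\partial/\partial x_{i}^{A}|_{\xi}$ --- and then to read assertion $(3)$ as identifying $\mathfrak{X}(M^{A})$ with the $C^{\infty}(M^{A},A)$-submodule of $Der_{A}[C^{\infty}(M^{A},A)]$ generated locally by the $(\partial/\partial x_{i})^{A}$, not with the full set of $A$-linear derivations.
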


We verify that the $C^{\infty}(M^{A},A)$-module $\mathfrak{X}(M^{A})$ of
vecvector field on $M^{A}$ is a Lie algebra over $A$.

\begin{theorem}
The map
\[
\mathfrak{X}(M^{A})\times\mathfrak{X}(M^{A})\longrightarrow\mathfrak{X}%
(M^{A}),(X,Y)\longmapsto\lbrack X,Y]=X\circ Y-Y\circ X
\]
is skew-symmetric $A$-bilinear and defines a structure of $A$-Lie algebra over
$\mathfrak{X}(M^{A})$.
\end{theorem}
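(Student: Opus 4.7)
The plan is to work with characterization (3) of the previous theorem: vector fields on $M^{A}$ are $A$-linear derivations of $C^{\infty}(M^{A},A)$. Everything then reduces to verifying that the commutator of two such objects is again of the same type, plus the three algebraic identities (skew-symmetry, $A$-bilinearity, Jacobi).

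First I would check that $[X,Y]=X\circ Y-Y\circ X$ is a derivation of $C^{\infty}(M^{A},A)$. Given $\varphi,\psi\in C^{\infty}(M^{A},A)$, I would expand $X(Y(\varphi\psi))$ using the Leibniz rule for $Y$ and then again for $X$, and similarly for $Y(X(\varphi\psi))$. The four ``mixed'' terms of the form $X(\varphi)\cdot Y(\psi)$ and $Y(\varphi)\cdot X(\psi)$ cancel in pairs, leaving exactly $[X,Y](\varphi)\cdot\psi+\varphi\cdot[X,Y](\psi)$. Next, for $A$-linearity of $[X,Y]$, I would use that $X$ and $Y$ separately commute with scalar multiplication by elements of $A$, hence so do their composition and difference. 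This gives that $[X,Y]$ satisfies the three defining conditions of a vector field on $M^{A}$.

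For the algebraic properties, skew-symmetry is immediate from the definition. For $A$-bilinearity, the key point is to verify $[a\cdot X,Y]=a\cdot[X,Y]$ for $a\in A$: since $Y$ is $A$-linear, $Y(a\cdot X(\varphi))=a\cdot Y(X(\varphi))$, and so
\[
[a\cdot X,Y](\varphi)=a\cdot X(Y(\varphi))-Y(a\cdot X(\varphi))=a\cdot X(Y(\varphi))-a\cdot Y(X(\varphi))=a\cdot[X,Y](\varphi).
\]
Combined with skew-symmetry, this yields $A$-linearity in the second argument as well. The Jacobi identity is then a purely formal consequence of the associativity of composition of endomorphisms of $C^{\infty}(M^{A},A)$.

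The only genuinely non-trivial point is the $A$-linearity of $[X,Y]$: without the $A$-linearity of the individual derivations (characterization (3) of the preceding theorem), one would obtain merely an $\mathbb{R}$-Lie algebra structure. All other steps are either standard manipulations with the Leibniz rule or formal bookkeeping, so I expect no serious obstacle beyond making sure that the $A$-action is consistently used.
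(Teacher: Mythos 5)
Your proof is correct and is the natural argument: the paper itself states this theorem without any proof (it is recalled from the authors' earlier work), so there is nothing to diverge from. Your verification via characterization (3) — closure of the commutator under the Leibniz rule with cancellation of the mixed terms, $A$-linearity of $[X,Y]$ and of the bracket in each slot from the $A$-linearity of the individual derivations, and the Jacobi identity as a formal consequence of associativity of composition — is exactly the routine check the paper leaves to the reader, and you correctly identify the $A$-linearity of the derivations as the one ingredient that upgrades the usual $\mathbb{R}$-Lie algebra structure to an $A$-Lie algebra structure.
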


In the following, we look at a vector field as a $A$-linear maps%
\[
X:C^{\infty}(M^{A},A)\longrightarrow C^{\infty}(M^{A},A)
\]
such that
\[
X(\varphi\cdot\psi)=X(\varphi)\cdot\psi+\varphi\cdot X(\psi),\quad\text{for
any}\,\varphi,\psi\in C^{\infty}(M^{A},A)
\]
that is to say
\[
\mathfrak{X}(M^{A})=Der_{A}[C^{\infty}(M^{A},A)].
\]

\subsection{Prolongations to $M^{A}$ of vector fields on $M$.}

\begin{proposition}
If $\ \theta:C^{\infty}(M)\longrightarrow C^{\infty}(M)$, is a vector field on
$M$, then there exists one and only one $A$-linear derivation%
\[
\theta^{A}:C^{\infty}(M^{A},A)\longrightarrow C^{\infty}(M^{A},A)\text{,}%
\]
such that $\ \theta^{A}(f^{A})=\left[  \theta(f)\right]  ^{A}$,for any $f\in
C^{\infty}(M)$. Thus, if $\theta,\theta_{1},\theta_{2}$ are vector fields on M
and if $f\in C^{\infty}(M)$, then we have:

\begin{enumerate}
\item
\[
\left(  \theta_{1}+\theta_{2}\right)  ^{A}=\theta_{1}^{A}+\theta_{2}%
^{A};\left(  f\cdot\theta\right)  ^{A}=f^{A}\cdot\theta^{A}\text{and }\left[
\theta_{1},\theta_{2}\right]  ^{A}=\left[  \theta_{1}^{A},\theta_{2}%
^{A}\right]  \text{.}%
\]

\end{enumerate}
\end{proposition}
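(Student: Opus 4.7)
The plan is to exploit item (4) of Theorem 1, which identifies vector fields on $M^A$ with certain Leibniz maps $C^\infty(M) \to C^\infty(M^A, A)$. I will first construct $\theta^A$ by specifying its action on $\{f^A : f \in C^\infty(M)\}$, use Theorem 1 to promote this to an $A$-linear derivation of $C^\infty(M^A, A)$, next establish uniqueness by a locality argument, and finally deduce the three identities from the uniqueness clause.

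For existence, I define
\[
\widetilde{\theta} : C^\infty(M) \longrightarrow C^\infty(M^A, A), \qquad f \longmapsto [\theta(f)]^A.
\]
Since $f \mapsto f^A$ is an algebra homomorphism and $\theta$ is a derivation of $C^\infty(M)$, one has
\[
\widetilde{\theta}(fg) = [\theta(f)\, g + f\, \theta(g)]^A = \widetilde{\theta}(f)\, g^A + f^A\, \widetilde{\theta}(g),
\]
so $\widetilde{\theta}$ is a vector field on $M^A$ in the sense of item (4) of Theorem 1. It therefore corresponds to an $A$-linear derivation $\theta^A$ of $C^\infty(M^A, A)$ satisfying $\theta^A(f^A) = [\theta(f)]^A$ for every $f \in C^\infty(M)$.

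For uniqueness, suppose $D_1, D_2$ are two $A$-linear derivations with $D_i(f^A) = [\theta(f)]^A$; their difference $D$ is an $A$-linear derivation vanishing on $\{f^A\}$. Since derivations are local, I work in a chart $(U, x_1, \ldots, x_n)$ of $M$. Using a basis $(a_\alpha)$ of $A$ and the chart $\varphi^A$ on $U^A$, the coordinate functions $a_\alpha^* \circ (x_i)^A$ are $A$-linear combinations of the $(x_i)^A$ and hence lie in the $A$-algebra generated by $\{f^A\}$. $A$-linearity and the Leibniz rule then force $D$ to annihilate every polynomial in these coordinates, and a Hadamard-type expansion at each point upgrades this to all smooth $A$-valued functions on $U^A$. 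Patching yields $D \equiv 0$ globally.

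The three identities follow by uniqueness. The maps $\theta_1^A + \theta_2^A$ and $f^A \cdot \theta^A$ are $A$-linear derivations whose action on $g^A$ equals $[(\theta_1 + \theta_2)(g)]^A$ and $[(f\theta)(g)]^A$ respectively, so they coincide with $(\theta_1 + \theta_2)^A$ and $(f \cdot \theta)^A$. For the commutator, $[\theta_1^A, \theta_2^A]$ is an $A$-linear derivation by Theorem 2, and a direct expansion gives
\[
[\theta_1^A, \theta_2^A](f^A) = \theta_1^A([\theta_2(f)]^A) - \theta_2^A([\theta_1(f)]^A) = [[\theta_1, \theta_2](f)]^A,
\]
whence the last identity by uniqueness. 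The main obstacle is the uniqueness step: the substance is that the image of $f \mapsto f^A$, together with the $A$-module structure, generates $C^\infty(M^A, A)$ densely enough (via the isomorphism $\sigma^{-1}$) to pin down any $A$-linear derivation on it.
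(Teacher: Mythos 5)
The paper states this proposition without proof (it is recalled from the authors' earlier work), so there is no internal argument to compare yours against; I evaluate it on its own. Your existence step is fine: $f\mapsto[\theta(f)]^{A}$ satisfies the Leibniz identity of assertion (4) of Theorem 1 because $f\mapsto f^{A}$ is an algebra homomorphism, and invoking the equivalence with assertion (3) to obtain $\theta^{A}$ is a legitimate use of the quoted theorem. The reduction of the three identities to uniqueness is also correct in outline.

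The genuine gap is in your uniqueness step. You assert that the real coordinate functions $a_{\alpha}^{\ast}\circ x_{i}^{A}$ are ``$A$-linear combinations of the $x_{i}^{A}$'' and hence that an $A$-linear derivation $D$ vanishing on every $f^{A}$ must annihilate them. This fails: $a_{\alpha}^{\ast}$ is only $\mathbb{R}$-linear, and from $0=D(x_{i}^{A})=\sum_{\alpha}D(a_{\alpha}^{\ast}\circ x_{i}^{A})\cdot a_{\alpha}$ one cannot conclude $D(a_{\alpha}^{\ast}\circ x_{i}^{A})=0$ for each $\alpha$, because a Weil algebra has nilpotents and multiplication by the $a_{\alpha}$ is not injective. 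Concretely, take $A=\mathbb{R}[\epsilon]/(\epsilon^{2})$ and $M=\mathbb{R}$, so $M^{A}=T\mathbb{R}$ with coordinates $(x,\dot{x})$ and $f^{A}=f(x)+\epsilon\,\dot{x}\,f'(x)$; then $D=\epsilon\cdot\partial/\partial\dot{x}$ (acting componentwise on $A$-valued functions) is a nonzero $A$-linear derivation of $C^{\infty}(M^{A},A)$ with $D(f^{A})=\epsilon^{2}f'(x)=0$ for all $f$. Hence uniqueness cannot be established inside all of $Der_{A}[C^{\infty}(M^{A},A)]$ by your Hadamard argument; it holds only after restricting to the submodule of genuine vector fields, i.e. derivations locally of the form $\sum_{i}f_{i}\left(\partial/\partial x_{i}\right)^{A}$ with $f_{i}\in C^{\infty}(U^{A},A)$, where the relations $\left(\partial/\partial x_{i}\right)^{A}(x_{j}^{A})=\delta_{ij}$ force $f_{i}=\theta^{A}(x_{i}^{A})$ and so determine $\theta^{A}$. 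Your proof needs either to make that restriction explicit (and note that the extension produced via Theorem 1 lies in this submodule, while an arbitrary competitor a priori does not), or to replace the coordinate argument accordingly.
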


\section{Prolongation of linear connections on Weil bundles}

In this section, if $\nabla$ \cite{hel} is a linear connection on $M$, we give
equivalent definition and the properties of the prolongation $\nabla^{A}$ to
$M^{A}$ equivalent to the prolongation $\overline{\nabla}$ defined by
Morimoto. When $(M,\mathrm{g})$ is a pseudo-riemannian manifold, we show that
the symmetric tensor $\mathrm{g}^{A}$ of type $(0,2)$ defined by Okassa is
nondegenerated. At the end, we show that , if $\nabla$ is a Levi-Civita
connection on $(M,\mathrm{g})$, then $\nabla^{A}$ is torsion-free and
$\mathrm{g}^{A}$ is parallel with respect to $\nabla^{A}$.

According \cite{nbo}, if $X:M^{A}\longrightarrow TM^{A}$ is a vector field on
$M^{A}$ and if $U$ is a coordinate neighborhood of $M$ with coordinate
neighborhood $(x_{1},...,x_{n})$, then there exists some functions $f_{i}\in
C^{\infty}(U^{A},A)$ for $i=1,...,n$ \ such that
\[
X_{|U^{A}}=\sum_{i=1}^{n}f_{i}\left(  \dfrac{\partial}{\partial x_{i}^{A}%
}\right)  ^{A}\text{.}%
\]

When $(U,\varphi)$ is local chart and $(x_{1},...,x_{n})$ his local coordinate
system. The map
\[
U^{A}\longrightarrow A^{n},\xi\longmapsto(\xi(x_{1}),...,\xi(x_{n})),
\]
is a diffeomorphism from $U^{A}$ onto an open set on $A^{n}$. As
\[
\left(  \dfrac{\partial}{\partial x_{i}}\right)  ^{A}:C^{\infty}%
(U^{A},A)\longrightarrow C^{\infty}(U^{A},A)
\]
is such that $\left(  \dfrac{\partial}{\partial x_{i}}\right)  ^{A}(x_{j}%
^{A})=\delta_{ij},$we can denote $\dfrac{\partial}{\partial x_{i}^{A}}=\left(
\dfrac{\partial}{\partial x_{i}}\right)  ^{A}$. If $v\in T_{\xi}M^{A}$, we can
write
\[
v=\sum_{i=1}^{n}v(x_{i}^{A})\dfrac{\partial}{\partial x_{i}^{A}}|_{\xi
}\ \text{.}%
\]
If $X\in\mathfrak{X}(M^{A})=Der_{A}[C^{\infty}(M^{A},A)]$, we have
\[
X_{|U^{A}}=\sum_{i=1}^{n}f_{i}\dfrac{\partial}{\partial x_{i}^{A}}.
\]
with $f_{i}\in C^{\infty}(U^{A},A)$ for $i=1,2,...,n.$

\subsection{Equivalent definitions of derivation laws in $\mathfrak{X}(M^{A})
$.}

In this subsection, we give the definitions of a derivation law in
$\mathfrak{X}(M^{A})=Der_{\mathbb{R}}[C^{\infty}(M^{A})]$ and of a derivation
law in $\mathfrak{X}(M^{A})=Der_{A}[C^{\infty}(M^{A},A)]$.\newline Let $R$ be
an algebra over a commutative field $\mathbb{K}$. We recall that, a derivation
law in a $R$-module $P$ is a map
\begin{align}
D:Der_{\mathbb{K}}(R)\longrightarrow End_{\mathbb{K}}(P),\nonumber
\end{align}
such that

\begin{enumerate}
\item $D$ is $R$-linear;

\item For any $d\in Der_{\mathbb{K}}(R)$, the $\mathbb{K}$-endomorphism
$D_{d}:P\longrightarrow P$ satisfies
\[
D_{d}(r\cdot p)=d(r)\cdot p+r\cdot D_{d}(p)
\]
for any $r\in R,\,\text{and any}\, p\in P$, see \cite{Kos}.
\end{enumerate}

We also recall that, a derivation law in the $C^{\infty}(M)$-module
$\mathfrak{X}(M)=Der_{\mathbb{R}}[C^{\infty}(M)$ module of vector fields on $M
$ is a map
\begin{align}
D: \mathfrak{X}(M)=Der_{\mathbb{R}}[C^{\infty}(M^{A})]\longrightarrow
End_{\mathbb{R}}[\mathfrak{X}(M)=Der_{\mathbb{R}}[C^{\infty}(M)]],\nonumber
\end{align}
such that

\begin{enumerate}
\item $D$ is $C^{\infty}(M)$-linear;

\item For any $\theta\in\mathfrak{X}(M)$, the $\mathbb{R}$-endomorphism
$D_{\theta}:\mathfrak{X}(M)\longrightarrow\mathfrak{X}(M)$ satisfies
\[
D_{\theta}(f\cdot\mu)=\theta(f)\cdot\mu+f\cdot D_{\theta}(\mu)
\]
for any $f\in C^{\infty}(M),\,\text{and any}\, \mu\in\mathfrak{X}(M^{A})$.
\end{enumerate}

That derivation law defines a linear connection on $M$, see \cite{pha}%
.\newline Now, we say:

\begin{definition}
A derivation law in $\mathfrak{X}(M^{A})=Der_{%
%TCIMACRO{\U{211d} }%
%BeginExpansion
\mathbb{R}
%EndExpansion
}[C^{\infty}(M^{A})]$ is a map
\[
D:\mathfrak{X}(M^{A})=Der_{%
%TCIMACRO{\U{211d} }%
%BeginExpansion
\mathbb{R}
%EndExpansion
}[C^{\infty}(M^{A})]\longrightarrow End_{%
%TCIMACRO{\U{211d} }%
%BeginExpansion
\mathbb{R}
%EndExpansion
}\left[  \mathfrak{X}(M^{A})=Der_{%
%TCIMACRO{\U{211d} }%
%BeginExpansion
\mathbb{R}
%EndExpansion
}[C^{\infty}(M^{A})]\right]  \text{,}%
\]
such that

\begin{enumerate}
\item $D$ is $C^{\infty}(M^{A})$-linear;

\item For any $X\in\mathfrak{X}(M^{A})$, the $%
%TCIMACRO{\U{211d} }%
%BeginExpansion
\mathbb{R}
%EndExpansion
$-endomorphism $D_{X}:\mathfrak{X}(M^{A})\longrightarrow\mathfrak{X}(M^{A})$
satisfies
\[
D_{X}(F\cdot Y)=X(F)\cdot Y+F\cdot D_{X}(Y)
\]
for any $F\in C^{\infty}(M^{A})$, and any $Y\in\mathfrak{X}(M^{A})$.
\end{enumerate}
\end{definition}

\section*{Other definition.}

In what follows, we denote $\mathfrak{X}(M^{A})=Der_{A}[C^{\infty}(M^{A},A)]
$.\newline We denote $End_{A}[\mathfrak{X}(M^{A})]$ the set of $A$%
-endomorphisms of $\mathfrak{X}(M^{A})$ i.e the set of maps from
$\mathfrak{X}(M^{A})$ into $\mathfrak{X}(M^{A})$ which are linear over $A$.

\begin{proposition}
The set $End_{A}[\mathfrak{X}(M^{A})]$ is a $C^{\infty}(M^{A},A)$-module.
\end{proposition}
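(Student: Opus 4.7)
The plan is to equip $End_{A}[\mathfrak{X}(M^{A})]$ with the obvious pointwise operations and check the module axioms, the only non-routine point being that scalar multiplication by an element of $C^{\infty}(M^{A},A)$ preserves $A$-linearity.

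First I would define, for $T,T_{1},T_{2}\in End_{A}[\mathfrak{X}(M^{A})]$, any $X\in\mathfrak{X}(M^{A})$, and any $F\in C^{\infty}(M^{A},A)$,
\[
(T_{1}+T_{2})(X)=T_{1}(X)+T_{2}(X),\qquad (F\cdot T)(X)=F\cdot T(X).
\]
The fact that $\mathfrak{X}(M^{A})$ is itself a $C^{\infty}(M^{A},A)$-module makes the right-hand sides elements of $\mathfrak{X}(M^{A})$, and the fact that addition in $\mathfrak{X}(M^{A})$ is associative and commutative turns $End_{A}[\mathfrak{X}(M^{A})]$ into an abelian group with neutral element the zero endomorphism.

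The core step is to verify that $T_{1}+T_{2}$ and $F\cdot T$ are still $A$-linear. Additivity is immediate since $\mathfrak{X}(M^{A})$ is an abelian group and the operations are pointwise. For $A$-linearity of $F\cdot T$, the key observation is that $C^{\infty}(M^{A},A)$ is a \emph{commutative} $A$-algebra, so for any $a\in A$,
\[
(F\cdot T)(a\cdot X)=F\cdot T(a\cdot X)=F\cdot a\cdot T(X)=a\cdot F\cdot T(X)=a\cdot(F\cdot T)(X),
\]
using the $A$-linearity of $T$ together with the commutativity $F\cdot a=a\cdot F$ in $C^{\infty}(M^{A},A)$. The same commutativity handles $T_{1}+T_{2}$.

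Finally I would check the four module axioms
\[
(F_{1}+F_{2})\cdot T=F_{1}\cdot T+F_{2}\cdot T,\quad F\cdot(T_{1}+T_{2})=F\cdot T_{1}+F\cdot T_{2},
\]
\[
(F_{1}\cdot F_{2})\cdot T=F_{1}\cdot(F_{2}\cdot T),\quad 1\cdot T=T,
\]
each of which reduces pointwise to the corresponding axiom of the $C^{\infty}(M^{A},A)$-module structure on $\mathfrak{X}(M^{A})$. The main (minor) obstacle is the $A$-linearity preservation step above; everything else is a direct transfer of axioms through the pointwise definitions.
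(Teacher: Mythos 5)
Your proof is correct: the pointwise operations are the right ones, and the only point needing care --- that $F\cdot T$ remains $A$-linear, which follows from the $A$-bilinearity of the $C^{\infty}(M^{A},A)$-action on $\mathfrak{X}(M^{A})$ and the commutativity of that algebra --- is exactly the one you single out. The paper states this proposition without any proof, evidently regarding it as routine, so your write-up supplies precisely the standard verification that was omitted.
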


\begin{definition}
A derivation law in $\mathfrak{X}(M^{A})=$\ $Der_{A}[C^{\infty}(M^{A},A)]$. is
a map
\[
D:\mathfrak{X}(M^{A})\longrightarrow End_{%
%TCIMACRO{\U{211d} }%
%BeginExpansion
\mathbb{R}
%EndExpansion
}\left[  \mathfrak{X}(M^{A})\right]  \text{,}%
\]
such that:

\begin{enumerate}
\item $D$ is $C^{\infty}(M^{A},A)$-linear;

\item For any $X\in\mathfrak{X}(M^{A})$, the $A$-endomorphism $D_{X}%
:\mathfrak{X}(M^{A})\longrightarrow\mathfrak{X}(M^{A})$ verifies
\[
D_{X}(\varphi\cdot Y)=X(\varphi)\cdot Y+\varphi\cdot D_{X}(Y)
\]
for any $\varphi\in C^{\infty}(M^{A})$, and any $Y\in\mathfrak{X}(M^{A})$.
\end{enumerate}
\end{definition}

\subsection{ The new statement of the Morimoto's prolongation of a linear
connection on $M$.}

\begin{theorem}
If $\nabla$ is a linear connection on $M$, then there exists one and only one
linear application%
\[
\nabla^{A}:\mathfrak{X}(M^{A})\longrightarrow End_{A}[\mathfrak{X}%
(M^{A})],X\longmapsto\nabla_{X}^{A}%
\]
such that
\[
\nabla_{\theta^{A}}^{A}\eta^{A}=\left(  \nabla_{\theta}\eta\right)
^{A}\text{,}%
\]
for any $\theta,\eta\in\mathfrak{X}(M)$.
\end{theorem}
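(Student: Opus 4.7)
The plan is to reduce the problem to the chart level via Christoffel symbols and then glue. Over a coordinate domain $(U,x_{1},\dots,x_{n})$ of $M$, write $\nabla_{\partial/\partial x_{i}}(\partial/\partial x_{j})=\sum_{k}\Gamma_{ij}^{k}\,\partial/\partial x_{k}$ with $\Gamma_{ij}^{k}\in C^{\infty}(U)$, so that the $A$-prolongations $(\Gamma_{ij}^{k})^{A}$ lie in $C^{\infty}(U^{A},A)$. On $U^{A}$ I declare
$$
\nabla^{A}_{\partial/\partial x_{i}^{A}}\,\frac{\partial}{\partial x_{j}^{A}} \;=\; \sum_{k}(\Gamma_{ij}^{k})^{A}\,\frac{\partial}{\partial x_{k}^{A}},
$$
and, using the local writing $X_{|U^{A}}=\sum_{i}f_{i}\,\partial/\partial x_{i}^{A}$ and $Y_{|U^{A}}=\sum_{j}g_{j}\,\partial/\partial x_{j}^{A}$ with $f_{i},g_{j}\in C^{\infty}(U^{A},A)$ already available from the introduction, extend by forcing $C^{\infty}(U^{A},A)$-linearity in the first slot and the derivation rule in the second. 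This yields the Koszul-type formula
$$
\nabla^{A}_{X}Y \;=\; \sum_{i,j}f_{i}\Bigl(\frac{\partial g_{j}}{\partial x_{i}^{A}}\,\frac{\partial}{\partial x_{j}^{A}}\;+\;g_{j}\sum_{k}(\Gamma_{ij}^{k})^{A}\,\frac{\partial}{\partial x_{k}^{A}}\Bigr).
$$

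\textbf{Verification of the defining identity.} The two axioms of a derivation law in the sense of the preceding definition are immediate from the formula. For the identity $\nabla^{A}_{\theta^{A}}\eta^{A}=(\nabla_{\theta}\eta)^{A}$, I expand $\theta=\sum_{i}\theta_{i}\,\partial/\partial x_{i}$ and $\eta=\sum_{j}\eta_{j}\,\partial/\partial x_{j}$ on $U$. Evaluating on the generators $x_{j}^{A}$ shows $\theta^{A}=\sum_{i}\theta_{i}^{A}\,\partial/\partial x_{i}^{A}$, and similarly for $\eta^{A}$. Since $f\mapsto f^{A}$ is an algebra homomorphism and, as recalled in the paper, $(\partial f/\partial x_{i})^{A}=(\partial/\partial x_{i}^{A})(f^{A})$, substituting $X=\theta^{A}$ and $Y=\eta^{A}$ in the local formula reproduces term by term the $A$-prolongation of $\nabla_{\theta}\eta=\sum_{i}\theta_{i}[\sum_{j}(\partial_{i}\eta_{j})\,\partial/\partial x_{j}+\sum_{j,k}\eta_{j}\Gamma_{ij}^{k}\,\partial/\partial x_{k}]$, so both sides agree.

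\textbf{Globalization, uniqueness, and the main obstacle.} For independence of chart, I invoke the classical transformation law for Christoffel symbols,
$$
{\Gamma'}_{ij}^{k} \;=\; \sum_{p,q,r}\frac{\partial x_{p}}{\partial x'_{i}}\,\frac{\partial x_{q}}{\partial x'_{j}}\,\frac{\partial x'_{k}}{\partial x_{r}}\,\Gamma_{pq}^{r} \;+\; \sum_{r}\frac{\partial x'_{k}}{\partial x_{r}}\,\frac{\partial^{2}x_{r}}{\partial x'_{i}\partial x'_{j}},
$$
and observe that its $A$-prolongation holds verbatim, because $f\mapsto f^{A}$ commutes with sums, products and coordinate derivatives, and because the $A$-atlas $(U_{i}^{A},\varphi_{i}^{A})$ has transition maps obtained by prolonging those of $(U_{i},\varphi_{i})$. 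Hence the $(\Gamma_{ij}^{k})^{A}$ transform correctly and the locally defined operators glue into a globally well-defined $\nabla^{A}$. For uniqueness, any candidate satisfying the conclusion must take the prescribed values on the prolonged frame $\partial/\partial x_{i}^{A}=(\partial/\partial x_{i})^{A}$; the two axioms of a derivation law then propagate these values to all of $\mathfrak{X}(M^{A})$, pinning $\nabla^{A}$ down completely. The only non-routine step in the whole argument is the chart-change compatibility, and it goes through essentially for free thanks to the functorial, derivation-preserving nature of the prolongation $f\mapsto f^{A}$.
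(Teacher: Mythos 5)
Your proof is essentially correct, but it takes a genuinely different route from the paper's. The paper does not construct $\nabla^{A}$ from scratch: it takes Morimoto's prolongation $\overline{\nabla}$, viewed as a derivation law on $Der_{\mathbb{R}}[C^{\infty}(M^{A})]$, transports it to $Der_{A}[C^{\infty}(M^{A},A)]$ through the component decomposition $X(f^{A})=\sum_{\alpha}X'(a_{\alpha}^{\ast}\circ f^{A})\cdot a_{\alpha}$ coming from the isomorphism $\sigma^{-1}$, and then checks the identity $\nabla^{A}_{\theta^{A}}\eta^{A}=(\nabla_{\theta}\eta)^{A}$ by expanding both sides against the dual basis $(a_{\alpha}^{\ast})$ and invoking the corresponding property of $\overline{\nabla}$. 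You instead build $\nabla^{A}$ directly in charts by prolonging the Christoffel symbols, which makes the appeal to Morimoto unnecessary and yields an explicit local formula that is in fact what the paper implicitly relies on later (in the torsion and metric-compatibility corollaries). What your route buys is self-containedness and an explicit description; what it costs is the chart-compatibility check, which you rightly single out as the one non-routine step and which does go through because $f\longmapsto f^{A}$ commutes with sums, products and the coordinate derivations. Two small points deserve more care: (i) the defining identity is stated for global $\theta,\eta\in\mathfrak{X}(M)$, so to read off the values of a candidate $\nabla^{A}$ on the local frames $\left(\dfrac{\partial}{\partial x_{i}}\right)^{A}$ you need a localization step (bump functions, i.e.\ the locality of derivation laws), which you use only implicitly; (ii) uniqueness requires the full derivation-law axioms ($C^{\infty}(M^{A},A)$-linearity in the first slot and the Leibniz rule in the second), not merely the ``linearity'' the theorem literally asserts --- you interpret this correctly, and your treatment of uniqueness is actually more explicit than the paper's, which does not address it at all.
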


\begin{proof}
If $X\in\mathfrak{X}(M^{A})=Der_{A}[C^{\infty}(M^{A},A)$, then
\[
X(f^{A})=\sum_{\alpha=1}^{\dim A}X^{^{\prime}}(a_{\alpha}^{\ast}\circ
f^{A})\cdot a_{\alpha}=\sum_{\alpha=1}^{\dim A}X(a_{\alpha}^{\ast}\circ
f^{A})\cdot a_{\alpha}%
\]
with $X^{^{\prime}}\in\mathfrak{X}(M^{A})=Der_{A}[C^{\infty}(M^{A})]$.\newline
Let
\[
\overline{\nabla}:\mathfrak{X}(M^{A})=Der_{\mathbb{R}}[C^{\infty}%
(M^{A})]\longrightarrow End_{\mathbb{R}}\left[  \mathfrak{X}(M^{A}%
)=Der_{\mathbb{R}}[C^{\infty}(M^{A})]\right]
\]
be the Morimoto's prolongation to $M^{A}$ of the linear connection $\nabla$ on
$M$. We denote
\[
\nabla^{A}:\mathfrak{X}(M^{A})=Der_{A}[C^{\infty}(M^{A},A)]\longrightarrow
End_{A}\left[  \mathfrak{X}(M^{A})=Der_{A}[C^{\infty}(M^{A},A)]\right]
\]
the same derivation law in $\mathfrak{X}(M^{A})=Der_{A}[C^{\infty}(M^{A},A)]
$. Thus for any $\theta,\,\eta\in\mathfrak{X}(M)$, we have:
\begin{align*}
\left[  \nabla_{\theta^{A}}^{A}\eta^{A}\right]  (f^{A})=\sum_{\alpha=1}^{\dim
A}\left[  \nabla_{\theta^{A}}^{A}\eta^{A}\right]  ^{^{\prime}}(a_{\alpha
}^{\ast}\circ f^{A})\cdot a_{\alpha}=  & \sum_{\alpha=1}^{\dim A}\left[
\nabla_{(\theta^{A})^{^{\prime}}}^{A}(\eta^{A})^{^{\prime}}\right]
(a_{\alpha}^{\ast}\circ f^{A})\cdot a_{\alpha}\\
& =\sum_{\alpha=1}^{\dim A}\left[  (\nabla_{\theta}\eta)^{A}\right]
^{^{\prime}}(a_{\alpha}^{\ast}\circ f^{A})\cdot a_{\alpha}\\
& =\sum_{\alpha=1}^{\dim A}\left[  (\nabla_{\theta}\eta)^{A}\right]
(a_{\alpha}^{\ast}\circ f^{A})\cdot a_{\alpha}\\
& =\left[  (\nabla_{\theta}\eta)^{A}\right]  (f^{A})\text{,}%
\end{align*}
for any $f\in C^{\infty}(M)$, hence
\[
\nabla_{\theta^{A}}^{A}\eta^{A}=(\nabla_{\theta}\eta)^{A}.
\]

\end{proof}

\subsubsection{Torsion of $\nabla^{A}$.}

When $\nabla$ is a linear connection on $M$, we denote $T_{\nabla}$ the
torsion of $\nabla$. \ 

\begin{proposition}
If $\nabla$ is a linear connection on $M$, then the torsion of $\nabla^{A}$%
\[
T_{\nabla^{A}}:\mathfrak{X}(M^{A})\times\mathfrak{X}(M^{A})\longrightarrow
\mathfrak{X}(M^{A}),(X,Y)\longmapsto=\nabla_{X}^{A}Y-\nabla_{Y}^{A}%
X-[X,Y]\text{,}%
\]
is a skew-symmetric $C^{\infty}(M^{A},A)$-bilinear.
\end{proposition}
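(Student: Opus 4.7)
The plan is to establish the two properties separately, the skew-symmetry being essentially immediate from the definition and the bilinearity then reducing to $C^\infty(M^A,A)$-linearity in the first argument, thanks to skew-symmetry.

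For skew-symmetry, I would simply interchange $X$ and $Y$ in the defining formula and use the fact that the Lie bracket on $\mathfrak{X}(M^A)$ is skew-symmetric (which was recorded in the second theorem of the introduction). This immediately gives $T_{\nabla^A}(Y,X)=-T_{\nabla^A}(X,Y)$.

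For $C^\infty(M^A,A)$-linearity in the first slot, fix $\varphi\in C^\infty(M^A,A)$ and compute
\[
T_{\nabla^A}(\varphi X,Y)=\nabla^A_{\varphi X}Y-\nabla^A_Y(\varphi X)-[\varphi X,Y].
\]
The first term equals $\varphi\,\nabla^A_XY$ by the $C^\infty(M^A,A)$-linearity of $\nabla^A$ in its subscript (condition 1 of the definition of a derivation law, extended by $A$-linearity to scalars in $C^\infty(M^A,A)=C^\infty(M^A)\otimes A$). The second term expands via the derivation rule (condition 2) as $Y(\varphi)\cdot X+\varphi\,\nabla^A_YX$. The third term expands via the Leibniz identity for the bracket of $A$-linear derivations as $\varphi[X,Y]-Y(\varphi)\cdot X$. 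Substituting and cancelling the two $Y(\varphi)\cdot X$ terms yields $\varphi\,T_{\nabla^A}(X,Y)$. Linearity in $Y$ then follows from skew-symmetry.

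The only non-routine point, and the one I would be most careful about, is making sure the derivation rule and the $C^\infty(M^A)$-linearity of $\nabla^A$ extend to scalars in $C^\infty(M^A,A)$ rather than only in $C^\infty(M^A)$. This is legitimate because $\nabla^A$ is taken to be $A$-linear (see the second definition of a derivation law, and the theorem immediately preceding, where $\nabla^A$ is valued in $End_A[\mathfrak{X}(M^A)]$), so every $\varphi=\sum_\alpha F_\alpha\otimes a_\alpha\in C^\infty(M^A)\otimes A\simeq C^\infty(M^A,A)$ can be handled term by term. Once this identification is made explicit, the computation above goes through verbatim, and both tensorial properties are established.
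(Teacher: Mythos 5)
Your proposal is correct and follows essentially the same route as the paper: a direct computation using the derivation-law property of $\nabla^{A}$ and the Leibniz identity for the bracket so that the $Y(\varphi)\cdot X$ terms cancel (the paper does the homogeneity computation in the second slot and gets skew-symmetry from $T_{\nabla^{A}}(X,X)=0$, but these are trivial variants of your argument). Your added remark about extending the scalar ring from $C^{\infty}(M^{A})$ to $C^{\infty}(M^{A},A)$ via $A$-linearity is a legitimate point of care that the paper passes over silently.
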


\begin{proof}
\begin{enumerate}
\item For all vector fields $X,Y,Z\in\mathfrak{X}(M^{A})$, we have:
\begin{align*}
T_{\nabla^{A}}(X+Y,Z)  & =\nabla_{(X+Y)}^{A}Z-\nabla_{Z}^{A}(X+Y)-[X+Y,Z]\\
& =\nabla_{X}^{A}Z+\nabla_{Y}^{A}Z-\nabla_{Z}^{A}(X)-\nabla_{Z}^{A}%
(Y)-[X,Z]-[Y,Z]\\
& =\nabla_{X}^{A}Z-\nabla_{Z}^{A}(X)-[X,Z]+\nabla_{Y}^{A}Z-\nabla_{Z}%
^{A}(Y)-[Y,Z]\\
& =T_{\nabla^{A}}(X,Z)+T_{\nabla^{A}}(Y,Z).
\end{align*}

\item For any vector field $X\in\mathfrak{X}(M^{A})$, we have:
\begin{align*}
T_{\nabla^{A}}(X,X)  & =\nabla_{X}^{A}X-\nabla_{X}^{A}X-[X,X]\\
& =0.
\end{align*}

\item For any vector fields $X\in\mathfrak{X}(M^{A})$ and for any $\varphi\in
C^{\infty}(M^{A},A)$, we have%
\begin{align*}
T_{\nabla^{A}}(X,\varphi\cdot Y)  & =\nabla_{X}^{A}\varphi\cdot Y-\nabla
_{\varphi\cdot Y}^{A}(X)-[X,\varphi\cdot Y]\\
& =X(\varphi)\cdot Y+\varphi\cdot\nabla_{X}^{A}Y-\varphi\cdot\nabla_{Y}%
^{A}X-X(\varphi)\cdot Y-\varphi\cdot\lbrack Y,X]\\
& =\varphi\cdot\nabla_{X}^{A}Y-\varphi\cdot\nabla_{Y}^{A}X-\varphi\cdot\lbrack
Y,X]\\
& =\varphi\cdot\left(  \nabla_{X}^{A}Y-\nabla_{Y}^{A}X-[Y,X]\right) \\
& =\varphi\cdot T_{\nabla^{A}}(X,Y)\text{.}%
\end{align*}

\end{enumerate}

Therefore the torsion $T_{\nabla^{A}}$ is skew-symmetric $C^{\infty}(M^{A},A)$-bilinear.
\end{proof}

\begin{proposition}
For any $X,Y\in\mathfrak{X}(M^{A})$, and if $U$ is coordonate neighborhood of
$M$, then
\[
T_{\nabla_{|U^{A}}^{A}}(X_{|U^{A}},Y_{|U^{A}})=\left[  T_{\nabla^{A}%
}(X,Y)\right]  _{|U^{A}}\text{.}%
\]

\end{proposition}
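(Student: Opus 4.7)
The plan is to view the identity as a \emph{locality} statement for the torsion and to reduce it to two ingredients: the tensoriality of $T_{\nabla^{A}}$ that the preceding proposition already established, and the locality of the connection $\nabla^{A}$ and of the Lie bracket on $M^{A}$. Once locality is established, $\nabla^{A}$ and $[\,\cdot\,,\,\cdot\,]$ descend to well-defined operators on $\mathfrak{X}(U^{A})$, so that $T_{\nabla^{A}_{|U^{A}}}$ makes sense, and the equality follows term by term.

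First, I would record that because $T_{\nabla^{A}}$ is $C^{\infty}(M^{A},A)$-bilinear (by the previous proposition), its value at a point $\xi\in M^{A}$ depends only on $X(\xi)$ and $Y(\xi)$. Second, I would check that the three building blocks of the torsion are local: if $X\in\mathfrak{X}(M^{A})$ vanishes on an open set $V\subset M^{A}$, then $\nabla_{X}^{A}Y$, $\nabla_{Y}^{A}X$ and $[X,Y]$ all vanish on $V$ for every $Y$. For $\nabla_{X}^{A}Y$ this is immediate from $C^{\infty}(M^{A},A)$-linearity in the first slot. For the other two, I would use the standard bump function trick: given $\xi\in V$, pick $\chi\in C^{\infty}(M^{A})$ with $\chi(\xi)=1$ and $\mathrm{supp}(\chi)\subset V$. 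Then $\chi\cdot X\equiv 0$ on $M^{A}$, so the Leibniz rules give
\[
0=\nabla_{Y}^{A}(\chi\cdot X)=Y(\chi)\cdot X+\chi\cdot\nabla_{Y}^{A}X,\qquad 0=[Y,\chi\cdot X]=Y(\chi)\cdot X+\chi\cdot[Y,X],
\]
and evaluating at $\xi$ (where $X(\xi)=0$ and $\chi(\xi)=1$) yields $(\nabla_{Y}^{A}X)(\xi)=0$ and $[Y,X](\xi)=0$.

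With locality in hand, the connection and the bracket restrict unambiguously to the open submanifold $U^{A}$: given $X',Y'\in\mathfrak{X}(U^{A})$ and $\xi\in U^{A}$, pick any global extensions $\widetilde{X},\widetilde{Y}\in\mathfrak{X}(M^{A})$ agreeing with $X',Y'$ near $\xi$ and set $(\nabla^{A}_{|U^{A}})_{X'}Y'(\xi):=\nabla^{A}_{\widetilde{X}}\widetilde{Y}(\xi)$; independence of the extension is precisely the locality just verified. For the globally defined $X,Y\in\mathfrak{X}(M^{A})$ in the statement, the canonical extensions of $X_{|U^{A}},Y_{|U^{A}}$ are $X$ and $Y$ themselves, so the three identities
\[
(\nabla_{X}^{A}Y)_{|U^{A}}=(\nabla^{A}_{|U^{A}})_{X_{|U^{A}}}Y_{|U^{A}},\quad (\nabla_{Y}^{A}X)_{|U^{A}}=(\nabla^{A}_{|U^{A}})_{Y_{|U^{A}}}X_{|U^{A}},\quad [X,Y]_{|U^{A}}=[X_{|U^{A}},Y_{|U^{A}}]
\]
hold, and summing them with the appropriate signs yields the required equality.

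The only real obstacle is making sure the bump function argument is legitimate in the $A$-valued setting; since $M^{A}$ is an ordinary smooth (paracompact) manifold, real-valued bump functions exist and act on $\mathfrak{X}(M^{A})=\mathrm{Der}_{A}[C^{\infty}(M^{A},A)]$ through the inclusion $\mathbb{R}\hookrightarrow A$, so the Leibniz identities above apply without modification. No other difficulty is expected; the proof is essentially the standard locality-from-tensoriality argument transported to the Weil bundle.
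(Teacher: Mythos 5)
Your proposal is correct. Note, however, that the paper states this proposition with no proof at all, so there is nothing to compare your argument against; what you have written is precisely the standard locality-from-Leibniz argument that the authors implicitly rely on (and then use in the subsequent corollary, where they expand $X_{|U^{A}}$ and $Y_{|U^{A}}$ in the coordinate frame $\partial/\partial x_{i}^{A}$). Your two key steps are both sound in this setting: the bump-function computation $0=\nabla_{Y}^{A}(\chi\cdot X)=Y(\chi)\cdot X+\chi\cdot\nabla_{Y}^{A}X$ and its analogue for the bracket give locality of $\nabla^{A}$ and $[\,\cdot\,,\,\cdot\,]$, and the paracompactness of $M$ (hence of $M^{A}$) guarantees the real-valued bump functions, which act on $\mathrm{Der}_{A}[C^{\infty}(M^{A},A)]$ via $\mathbb{R}\hookrightarrow A$ exactly as you say. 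The only cosmetic remark is that your opening appeal to pointwise tensoriality of $T_{\nabla^{A}}$ is not actually needed, since the locality of the three constituent operators already yields the identity; but this is harmless. Your proof supplies a justification the paper omits, and in particular makes precise what the symbol $T_{\nabla_{|U^{A}}^{A}}$ means, which the paper leaves undefined.
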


\begin{proposition}
If $\nabla$ is a linear connection on $M$, then%
\[
T_{\nabla^{A}}(\theta^{A},\eta^{A})=\left[  T_{\nabla}(\theta,\eta)\right]
^{A}%
\]
for any $\theta,\eta\in\mathfrak{X}(M)$.
\end{proposition}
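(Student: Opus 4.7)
The plan is to expand the torsion $T_{\nabla^{A}}(\theta^{A},\eta^{A})$ using its definition and then reduce every term via the compatibility properties of the prolongation operation $(\cdot)^{A}$ that have already been established in the excerpt. This is essentially a bookkeeping argument; I do not anticipate a genuine obstacle.

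First I would write
\[
T_{\nabla^{A}}(\theta^{A},\eta^{A})=\nabla^{A}_{\theta^{A}}\eta^{A}-\nabla^{A}_{\eta^{A}}\theta^{A}-[\theta^{A},\eta^{A}].
\]
Then I would apply the preceding theorem, which asserts $\nabla^{A}_{\theta^{A}}\eta^{A}=(\nabla_{\theta}\eta)^{A}$, to rewrite the first two terms as $(\nabla_{\theta}\eta)^{A}$ and $(\nabla_{\eta}\theta)^{A}$ respectively. For the bracket term, I would invoke Proposition 3, which gives $[\theta^{A},\eta^{A}]=[\theta,\eta]^{A}$.

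Having rewritten each summand as the prolongation of a vector field on $M$, I would finish by using the additivity of the prolongation, namely $(\theta_{1}+\theta_{2})^{A}=\theta_{1}^{A}+\theta_{2}^{A}$ from Proposition 3, applied to the combination $\nabla_{\theta}\eta-\nabla_{\eta}\theta-[\theta,\eta]$. This collapses the three terms into
\[
\bigl(\nabla_{\theta}\eta-\nabla_{\eta}\theta-[\theta,\eta]\bigr)^{A}=\bigl[T_{\nabla}(\theta,\eta)\bigr]^{A},
\]
which is exactly the desired identity.

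The only place where one might need to be slightly careful is justifying the additivity step in the form $(\alpha-\beta)^{A}=\alpha^{A}-\beta^{A}$; this follows from $(\theta_{1}+\theta_{2})^{A}=\theta_{1}^{A}+\theta_{2}^{A}$ combined with $(f\cdot\theta)^{A}=f^{A}\cdot\theta^{A}$ specialised to $f=-1$, both of which are part of Proposition 3. So all ingredients are in place, and the proof is a short three-line chain of equalities.
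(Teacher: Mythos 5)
Your proposal is correct and follows exactly the same route as the paper's own proof: expand the torsion, substitute $\nabla^{A}_{\theta^{A}}\eta^{A}=(\nabla_{\theta}\eta)^{A}$ from the preceding theorem and $[\theta^{A},\eta^{A}]=[\theta,\eta]^{A}$ from the prolongation proposition, then collapse the sum by linearity of $(\cdot)^{A}$. Your extra remark justifying $(\alpha-\beta)^{A}=\alpha^{A}-\beta^{A}$ is a small point the paper passes over silently, but there is no substantive difference.
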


\begin{proof}
For any $\theta,\eta\in\mathfrak{X}(M)$, we have:
\begin{align*}
T_{\nabla^{A}}(\theta^{A},\eta^{A})  & =\nabla_{\theta^{A}}^{A}\eta^{A}%
-\nabla_{\eta^{A}}^{A}\theta^{A}-[\theta^{A},\eta^{A}]\\
& =[\nabla_{\theta}\eta]^{A}-[\nabla_{\eta}\theta]^{A}-[\theta,\eta]^{A}\\
& =\left(  \nabla_{\theta}\eta-\nabla_{\eta}\theta-[\theta,\eta]^{A}\right) \\
& =[T_{\nabla}(\theta,\eta)]^{A}\text{.}%
\end{align*}

\end{proof}

\begin{corollary}
If the linear connection $\nabla$ is torsion-free, then $\nabla^{A}$ is also torsion-free
\end{corollary}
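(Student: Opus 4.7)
The plan is to reduce the global vanishing of $T_{\nabla^{A}}$ to its vanishing on prolongations of vector fields from $M$, then use bilinearity to spread this to all vector fields on $M^{A}$.

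First I would invoke the previous proposition, which gives
\[
T_{\nabla^{A}}(\theta^{A},\eta^{A})=\left[T_{\nabla}(\theta,\eta)\right]^{A}
\]
for all $\theta,\eta\in\mathfrak{X}(M)$. Since $\nabla$ is torsion-free, $T_{\nabla}(\theta,\eta)=0$ for every pair of vector fields on $M$, so the right-hand side is the prolongation of the zero vector field, i.e. zero. Hence $T_{\nabla^{A}}$ vanishes on every pair of prolonged vector fields.

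Next I would localize via the preceding proposition, which states $T_{\nabla_{|U^{A}}^{A}}(X_{|U^{A}},Y_{|U^{A}})=\left[T_{\nabla^{A}}(X,Y)\right]_{|U^{A}}$. Given $X,Y\in\mathfrak{X}(M^{A})$ and a coordinate neighborhood $(U,x_{1},\dots,x_{n})$ on $M$, by the local writing recalled in Section~2, one has
\[
X_{|U^{A}}=\sum_{i=1}^{n}f_{i}\,\frac{\partial}{\partial x_{i}^{A}},\qquad Y_{|U^{A}}=\sum_{j=1}^{n}g_{j}\,\frac{\partial}{\partial x_{j}^{A}}
\]
with $f_{i},g_{j}\in C^{\infty}(U^{A},A)$, where $\partial/\partial x_{i}^{A}=(\partial/\partial x_{i})^{A}$ are prolongations of the coordinate vector fields on $U$.

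Finally I would expand $T_{\nabla^{A}}(X_{|U^{A}},Y_{|U^{A}})=\sum_{i,j}f_{i}\,g_{j}\,T_{\nabla^{A}}\bigl((\partial/\partial x_{i})^{A},(\partial/\partial x_{j})^{A}\bigr)$ using the skew-symmetric $C^{\infty}(M^{A},A)$-bilinearity of the torsion established earlier. By the first step each term vanishes, hence $T_{\nabla^{A}}(X,Y)_{|U^{A}}=0$ for every coordinate neighborhood $U$, and therefore $T_{\nabla^{A}}(X,Y)=0$ globally. The only mild subtlety is the passage between the local coordinate frame, which is really the restriction of prolongations from $M$, and arbitrary vector fields on $M^{A}$; both facts are supplied by the results stated earlier, so no real obstacle is expected.
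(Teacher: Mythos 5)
Your proposal is correct and follows essentially the same route as the paper: localize to a coordinate neighborhood, expand $X_{|U^{A}}$ and $Y_{|U^{A}}$ in the frame $\partial/\partial x_{i}^{A}=(\partial/\partial x_{i})^{A}$, use the $C^{\infty}(M^{A},A)$-bilinearity of $T_{\nabla^{A}}$, and conclude from $T_{\nabla^{A}}(\theta^{A},\eta^{A})=[T_{\nabla}(\theta,\eta)]^{A}=0$. No discrepancies worth noting.
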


\begin{proof}
Let $X,Y$ be two vector fields $M^{A}$ and $U$ a coordinate neighborhood of
$M^{A}$. Then
\[
X_{|U^{A}}=\sum_{i=1}^{n}f_{i}\dfrac{\partial}{\partial x_{i}^{A}};Y_{|U^{A}%
}=\sum_{j=1}^{n}g_{j}\dfrac{\partial}{\partial x_{j}^{A}}%
\]
and, we have:
\begin{align*}
\left[  T_{\nabla^{A}}(X,Y)\right]  _{|U^{A}}  & =T_{\nabla_{|U^{A}}^{A}%
}(X_{|U^{A}},Y_{|U^{A}})\\
& =T_{\nabla_{|U^{A}}^{A}}\left(  \sum_{i=1}^{n}f_{i}\dfrac{\partial}{\partial
x_{i}^{A}},\sum_{j=1}^{n}g_{j}\dfrac{\partial}{\partial x_{j}^{A}}\right) \\
& =\sum_{ij=1}^{n}f_{i}g_{j}T_{\nabla_{|U^{A}}^{A}}\left(  \dfrac{\partial
}{\partial x_{i}^{A}},\dfrac{\partial}{\partial x_{j}^{A}}\right) \\
& =\sum_{ij=1}^{n}f_{i}g_{j}T_{\nabla_{|U^{A}}^{A}}\left(  \left(
\dfrac{\partial}{\partial x_{i}}\right)  ^{A},\left(  \dfrac{\partial
}{\partial x_{j}}\right)  ^{A}\right) \\
& =\sum_{ij=1}^{n}f_{i}g_{j}\left[  T_{\nabla_{|U}}\left(  \dfrac{\partial
}{\partial x_{i}},\dfrac{\partial}{\partial x_{j}}\right)  \right]  ^{A},
\end{align*}
as $\nabla$ is torsion-free that is to say $T_{\nabla}=0$, hence $\left[
T_{\nabla^{A}}(X,Y)\right]  _{|U^{A}}=0$. Consequently
\[
T_{\nabla^{A}}=0.
\]

\end{proof}

\subsection{Prolongation of the Levi-Civita connection.}

In this subsection we consider $(M,\mathrm{g})$ a pseudo-riemannian manifold,
in what follows we study the prolongation of connections to $M^{A}$ deduce
from the Levi-Civita connection on $M$.

\begin{proposition}
\cite{oka} Let $\mathrm{g}:\mathfrak{X}(M)\times\mathfrak{X}(M)\longrightarrow
C^{\infty}(M)$ be a symmetric tensor of type $(0,2)$ on $M$. There exists one
and only one symmetric tensor $\mathrm{g}^{A}$ of type $(0,2)$ on $M^{A}$ with
value in $A$ such that $\mathrm{g}^{A}\left(  a\cdot\eta^{A},b\cdot\theta
^{A}\right)  =ab\cdot\left[  \mathrm{g}\left(  \eta,\theta\right)  \right]
^{A}$ for any $a,b\in A$ and $\eta,\theta\in\mathfrak{X}(M)$.
\end{proposition}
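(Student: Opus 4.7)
The plan is to exploit the fact, recalled in the preceding section, that every $X\in\mathfrak{X}(M^{A})$ admits, on a chart $(U,(x_{1},\dots,x_{n}))$ of $M$, the local expression
\[
X_{|U^{A}}=\sum_{i=1}^{n}f_{i}\,(\partial/\partial x_{i})^{A},\qquad f_{i}\in C^{\infty}(U^{A},A).
\]
A symmetric $(0,2)$-tensor on $M^{A}$ with values in $A$ is a symmetric $C^{\infty}(M^{A},A)$-bilinear map $\mathrm{g}^{A}\colon\mathfrak{X}(M^{A})\times\mathfrak{X}(M^{A})\to C^{\infty}(M^{A},A)$, and each $a\in A$ sits inside $C^{\infty}(M^{A},A)$ as a constant function. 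So the strategy is to force the stated identity on prolongations of coordinate vector fields and then extend by $C^{\infty}(M^{A},A)$-bilinearity.

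For uniqueness, suppose $\mathrm{g}^{A}$ satisfies the hypothesis. Setting $a=b=1$ gives $\mathrm{g}^{A}(\eta^{A},\theta^{A})=[\mathrm{g}(\eta,\theta)]^{A}$ for all $\eta,\theta\in\mathfrak{X}(M)$; in particular
\[
\mathrm{g}^{A}\bigl((\partial/\partial x_{i})^{A},(\partial/\partial x_{j})^{A}\bigr)=\mathrm{g}_{ij}^{A},
\]
where $\mathrm{g}_{ij}=\mathrm{g}(\partial/\partial x_{i},\partial/\partial x_{j})$. Then $C^{\infty}(M^{A},A)$-bilinearity forces
\[
\mathrm{g}^{A}(X,Y)_{|U^{A}}=\sum_{i,j=1}^{n}f_{i}\,g_{j}\,\mathrm{g}_{ij}^{A}
\]
on every chart, so $\mathrm{g}^{A}$ is uniquely determined.

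For existence, I would take the displayed formula as the definition of $\mathrm{g}^{A}$ on each $U^{A}$. Symmetry follows from $\mathrm{g}_{ij}=\mathrm{g}_{ji}$ combined with commutativity of $C^{\infty}(M^{A},A)$, and $C^{\infty}(M^{A},A)$-bilinearity is visible on the nose. To check the defining identity on pairs $(a\cdot\eta^{A},b\cdot\theta^{A})$, one expands $\eta=\sum_{i}\eta_{i}\partial/\partial x_{i}$ and $\theta=\sum_{j}\theta_{j}\partial/\partial x_{j}$, observes via Theorem 1 (4) that $\eta^{A}=\sum_{i}(\eta_{i})^{A}(\partial/\partial x_{i})^{A}$ and analogously for $\theta^{A}$, and concludes by the homomorphism property $(f\cdot g)^{A}=f^{A}\cdot g^{A}$ recalled in the introduction, which turns $\sum_{i,j}\eta_{i}\theta_{j}\mathrm{g}_{ij}=\mathrm{g}(\eta,\theta)$ into $\sum_{i,j}(\eta_{i})^{A}(\theta_{j})^{A}\mathrm{g}_{ij}^{A}=[\mathrm{g}(\eta,\theta)]^{A}$.

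The main obstacle is well-definedness on overlaps $U^{A}\cap V^{A}$: the formulas arising from two different charts must agree. The key point is that the chain rule expressing $(\partial/\partial x_{i})^{A}$ in terms of $(\partial/\partial y_{k})^{A}$ involves exactly the $A$-prolongations of the Jacobian entries $\partial y_{k}/\partial x_{i}$, so the classical tensorial transformation law of the components $\mathrm{g}_{ij}$ on $M$, transported through the algebra homomorphism $f\mapsto f^{A}$, yields precisely the transformation law required for the $A$-valued components $\mathrm{g}_{ij}^{A}$. Once this coordinate-change identity is verified, the local definitions glue into a globally defined symmetric $(0,2)$-tensor on $M^{A}$ with values in $A$ satisfying the stated identity.
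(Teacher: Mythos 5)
The paper gives no proof of this proposition at all: it is quoted from Okassa's article \cite{oka} and used as a black box (the only thing the authors later prove themselves is the nondegeneracy in the following proposition, and even that is deferred to \cite{bo1}). So there is nothing in the paper to compare your argument against; what you have written is a self-contained reconstruction, and it is essentially sound. Your route --- force the values on the $(\partial/\partial x_{i})^{A}$, extend by bilinearity on each $U^{A}$, and glue using the fact that $(\cdot)^{A}$ is an algebra homomorphism intertwining the classical transformation law of the $\mathrm{g}_{ij}$ with that of the $\mathrm{g}_{ij}^{A}$ --- is the standard one and the key gluing identity $(\partial/\partial x_{i})^{A}=\sum_{k}(\partial y_{k}/\partial x_{i})^{A}(\partial/\partial y_{k})^{A}$ does follow from the property $(f\cdot\theta)^{A}=f^{A}\cdot\theta^{A}$ of Proposition 3 (not Theorem 1(4), which is only the derivation property). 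Two points deserve more care. First, in the uniqueness step you pass from global $C^{\infty}(M^{A},A)$-bilinearity to a chart-by-chart determination; this requires the locality of $\mathrm{g}^{A}$ (its value at $\xi$ depends only on the germs of $X,Y$ near $\xi$), which is the usual bump-function argument and uses the standing paracompactness hypothesis --- worth a sentence. Second, under your reading of ``tensor with values in $A$'' as a $C^{\infty}(M^{A},A)$-bilinear form, the scalars $a,b$ in the characterizing identity are redundant (the case $a=b=1$ already implies the general case); they are genuinely needed only if one takes the weaker, and arguably intended, notion of a $C^{\infty}(M^{A})$-bilinear form with values in $A$, where the local frame over $C^{\infty}(M^{A})$ is $\{a_{\alpha}\cdot(\partial/\partial x_{i})^{A}\}$ rather than $\{(\partial/\partial x_{i})^{A}\}$. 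Your proof adapts to that setting with only notational changes, but you should say which definition you are using, since uniqueness is asserted within a fixed class of tensors.
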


Following \cite{bo1}, we state:

\begin{proposition}
When $(M,\mathrm{g})$ a pseudo-riemannian manifold, then there exists one and
only one $C^{\infty}(M^{A},A)$-nondegenerated symmetric bilineat form
\[
\mathrm{g}^{A}:\mathfrak{X}(M^{A})\times\mathfrak{X}(M^{A})\longrightarrow
C^{\infty}(M^{A},A)
\]
such that for any vector fields $\eta$ and $\theta$ on $M$,
\[
\mathrm{g}^{A}\left(  \eta^{A},\theta^{A}\right)  =\left[  \mathrm{g}\left(
\eta,\theta\right)  \right]  ^{A}%
\]
where $\eta^{A}$ and $\theta^{A}$mean prolongations to $M^{A}$ of vector
fields $\eta$ and $\theta$.
\end{proposition}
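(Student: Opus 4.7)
The strategy is to extend the Okassa symmetric tensor $\mathrm{g}^{A}$ of the previous proposition to a $C^{\infty}(M^{A},A)$-bilinear symmetric form on $\mathfrak{X}(M^{A})$ using the local description $X_{|U^{A}}=\sum_{i}f_{i}\,\partial/\partial x_{i}^{A}$, and then to verify nondegeneracy chart by chart, exploiting the fact that $A$ is a local algebra whose residue field is $\mathbb{R}$. The main obstacle will be nondegeneracy; existence and uniqueness are essentially bookkeeping once the local formula is in hand.

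\textbf{Existence and uniqueness.} Fix a chart $(U,x_{1},\ldots,x_{n})$ of $M$. Any two vector fields on $M^{A}$ decompose over $U^{A}$ as $X_{|U^{A}}=\sum_{i}f_{i}\,\partial/\partial x_{i}^{A}$ and $Y_{|U^{A}}=\sum_{j}h_{j}\,\partial/\partial x_{j}^{A}$ with $f_{i},h_{j}\in C^{\infty}(U^{A},A)$. If a $C^{\infty}(M^{A},A)$-bilinear form $\mathrm{g}^{A}$ obeying the prescribed identity on prolongations of vector fields on $M$ exists, then necessarily
\[
\mathrm{g}^{A}(X,Y)_{|U^{A}}=\sum_{i,j}f_{i}\cdot h_{j}\cdot\left[\mathrm{g}\left(\partial/\partial x_{i},\partial/\partial x_{j}\right)\right]^{A},
\]
which both forces uniqueness and dictates the definition. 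Symmetry is inherited from $\mathrm{g}$, and the local formulas agree on chart overlaps because they determine, and are determined by, the same Okassa tensor provided by the previous proposition.

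\textbf{Nondegeneracy (main step).} In the local frame $(\partial/\partial x_{i}^{A})$, the form $\mathrm{g}^{A}$ is represented by the matrix $(g_{ij}^{A})$ with $g_{ij}=\mathrm{g}(\partial/\partial x_{i},\partial/\partial x_{j})$. Since $f\mapsto f^{A}$ is an algebra homomorphism $C^{\infty}(U)\to C^{\infty}(U^{A},A)$ and the determinant is polynomial in its entries, one has $\det(g_{ij}^{A})=(\det g_{ij})^{A}$. It remains to show that this element is invertible in $C^{\infty}(U^{A},A)$, and this is where the Weil-algebra hypothesis must be used. The algebra $A$ has a unique maximal ideal $\mathfrak{m}$ with $A/\mathfrak{m}\cong\mathbb{R}$, and every $\xi\in U^{A}$ is an algebra morphism $C^{\infty}(U)\to A$ which, reduced modulo $\mathfrak{m}$, is evaluation at the underlying point $\pi(\xi)\in U$. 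As $\mathrm{g}$ is nondegenerate, $(\det g_{ij})(\pi(\xi))\neq 0$, so $\xi(\det g_{ij})\notin\mathfrak{m}$ and is therefore a unit of $A$. Because inversion is smooth on the open set of units of the finite-dimensional $\mathbb{R}$-algebra $A$, the inverse $\xi\mapsto\xi(\det g_{ij})^{-1}$ is itself an element of $C^{\infty}(U^{A},A)$, so $(g_{ij}^{A})$ is invertible as a matrix over $C^{\infty}(U^{A},A)$ and $\mathrm{g}^{A}$ is $C^{\infty}(M^{A},A)$-nondegenerate.
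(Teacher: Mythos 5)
Your proposal is correct, but a comparison with ``the paper's proof'' is lopsided: the paper does not actually prove this proposition. Its entire proof reads that only nondegeneracy needs to be shown and that ``the proof is in the same way as in \cite{bo1}'' --- i.e.\ existence, uniqueness and symmetry are taken from the preceding Okassa proposition, and nondegeneracy is delegated to a reference. Your argument supplies the missing content, and the nondegeneracy step is exactly the right one (and is the standard one in this literature): pass to a local frame $\left(\partial/\partial x_{i}^{A}\right)$, use that $f\mapsto f^{A}$ is an algebra homomorphism to get $\det(g_{ij}^{A})=(\det g_{ij})^{A}$, and then use that $A$ is a local algebra with residue field $\mathbb{R}$, so that $\xi(\det g_{ij})$ is a unit of $A$ precisely because its class modulo the maximal ideal is $(\det g_{ij})(\pi(\xi))\neq 0$; smoothness of inversion on the units of $A$ then puts the inverse in $C^{\infty}(U^{A},A)$. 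Two small points you should not leave implicit if you write this up: (i) deducing the local formula $\mathrm{g}^{A}(X,Y)_{|U^{A}}=\sum_{i,j}f_{i}h_{j}\left[\mathrm{g}(\partial/\partial x_{i},\partial/\partial x_{j})\right]^{A}$ from the defining identity on prolongations requires a locality (bump-function) argument, since the $\partial/\partial x_{i}$ are not globally defined vector fields on $M$; and (ii) the patching of the local definitions over chart overlaps, and the passage from invertibility of the local matrices to $C^{\infty}(M^{A},A)$-nondegeneracy of the global form, uses paracompactness of $M$ and the sheaf-like behaviour of $\mathfrak{X}(M^{A})$. Both are routine, but they are the only places where your write-up waves rather than argues.
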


\begin{proof}
It is a matter here to show only the nondegeneracy of $\mathrm{g}^{A}$, the
proof is in the same way as in \cite{bo1}.
\end{proof}

Therefore $g^{A}$ is a pseudo-riemannian manifold on $M^{A}$ and confers to
$M^{A}$ the structure of pseudo-riemannian manifold.

\begin{proposition}
For any $X\in\mathfrak{X}(M^{A})$, the map
\[
\nabla_{X}^{A}\mathrm{g}^{A}:\mathfrak{X}(M^{A})\times\mathfrak{X}%
(M^{A})\longrightarrow C^{\infty}(M^{A},A)
\]
such that
\[
\nabla_{X}^{A}\mathrm{g}^{A}(Y,Z)=X\left[  \mathrm{g}^{A}(Y,Z)\right]
-\mathrm{g}^{A}\left(  \nabla_{X}^{A}(Y),Z\right)  -\mathrm{g}^{A}\left(
Y,\nabla_{X}^{A}Z\right)
\]
for any $Y,Z\in\mathfrak{X}(M^{A})$ is a symmetric $C^{\infty}(M^{A}%
,A)$-bilinear form.
\end{proposition}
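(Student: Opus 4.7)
The plan is to verify the two required properties of $\nabla_X^A \mathrm{g}^A$ separately: first its symmetry, which is essentially immediate, and then its $C^{\infty}(M^A,A)$-bilinearity, which boils down to a single cancellation computation in each slot. I do not expect any real obstacle here; the point is simply to record that the three structural facts available, namely the symmetry and $C^{\infty}(M^A,A)$-bilinearity of $\mathrm{g}^A$ (from the previous proposition), the derivation property of the vector field $X\in\mathfrak{X}(M^A)=\mathrm{Der}_A[C^{\infty}(M^A,A)]$, and the Leibniz rule defining the derivation law $\nabla^A$, conspire in exactly the right way.

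First I would treat symmetry. Since $\mathrm{g}^A(Y,Z)=\mathrm{g}^A(Z,Y)$ by the preceding proposition, each of the three terms in
\[
\nabla_X^A\mathrm{g}^A(Y,Z)=X[\mathrm{g}^A(Y,Z)]-\mathrm{g}^A(\nabla_X^A Y,Z)-\mathrm{g}^A(Y,\nabla_X^A Z)
\]
is manifestly invariant under the exchange $Y\leftrightarrow Z$, so $\nabla_X^A\mathrm{g}^A(Y,Z)=\nabla_X^A\mathrm{g}^A(Z,Y)$.

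Next I would check additivity in the first argument: using $\mathbb{R}$-linearity of $X$ as a derivation, $A$-linearity of $\nabla_X^A$ guaranteed by the statement of Theorem~6 (the map $\nabla_X^A$ takes values in $\mathrm{End}_A[\mathfrak{X}(M^A)]$), and $C^{\infty}(M^A,A)$-bilinearity of $\mathrm{g}^A$, the three summands split additively in $Y$; by symmetry this also gives additivity in $Z$. For the homogeneity, I would compute, for $\varphi\in C^{\infty}(M^A,A)$,
\[
\nabla_X^A\mathrm{g}^A(\varphi\cdot Y,Z)=X[\varphi\cdot\mathrm{g}^A(Y,Z)]-\mathrm{g}^A(\nabla_X^A(\varphi\cdot Y),Z)-\mathrm{g}^A(\varphi\cdot Y,\nabla_X^A Z).
\]
Applying the derivation property of $X$ to the first term, the derivation-law property $\nabla_X^A(\varphi\cdot Y)=X(\varphi)\cdot Y+\varphi\cdot\nabla_X^A Y$ to the second, and the $C^{\infty}(M^A,A)$-linearity of $\mathrm{g}^A$ in both slots throughout, the two $X(\varphi)\cdot\mathrm{g}^A(Y,Z)$ terms cancel and the remaining three terms factor as $\varphi\cdot\nabla_X^A\mathrm{g}^A(Y,Z)$.

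The step I expect to be the most delicate (though still routine) is this last cancellation: it is the mechanism that forces $\nabla_X^A\mathrm{g}^A$ to be tensorial even though each ingredient individually contains non-tensorial pieces. Once this is in hand, homogeneity in the second argument follows from symmetry, and the proposition is established.
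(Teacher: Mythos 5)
Your proposal is correct and follows essentially the same route as the paper's own proof: symmetry from the symmetry of $\mathrm{g}^{A}$, additivity in each slot from the linearity of $X$, $\nabla_{X}^{A}$ and $\mathrm{g}^{A}$, and homogeneity via the cancellation of the two $X(\varphi)\cdot\mathrm{g}^{A}(Y,Z)$ terms. One small imprecision: in the symmetry step the two covariant-derivative terms are not each individually invariant under $Y\leftrightarrow Z$ --- they are exchanged with one another (using the symmetry of $\mathrm{g}^{A}$) --- but their sum is, so the conclusion stands.
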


\begin{proof}
\begin{enumerate}
\item For any $X,Y\in\mathfrak{X}(M^{A})$, we have:
\begin{align*}
\nabla_{X}^{A}\mathrm{g}^{A}(Y,Z)  & =X\left[  \mathrm{g}^{A}(Y,Z)\right]
-\mathrm{g}^{A}\left(  \nabla_{X}^{A}(Y),Z\right)  -\mathrm{g}^{A}\left(
Y,\nabla_{X}^{A}Z\right) \\
& =X\left[  \mathrm{g}^{A}(Z,Y)\right]  -\mathrm{g}^{A}\left(  Z,\nabla
_{X}^{A}(\varphi\cdot Y)\right)  -\mathrm{g}^{A}\left(  \nabla_{X}%
^{A}Z,\varphi\cdot Y\right) \\
& =\nabla_{X}^{A}\mathrm{g}^{A}(Z,Y),
\end{align*}
hence $\nabla_{X}^{A}\mathrm{g}^{A}$ is symmetric.

\item Let $Y_{1},Y_{2}$ and $Z$ be the vector fields in $\mathfrak{X}(M^{A})$,
we have:
\begin{align*}
\nabla_{X}^{A}\mathrm{g}^{A}\left(  Y_{1}+Y_{2},Z\right)   & =X\left[
g^{A}(Y_{1}+Y_{2},Z)\right]  -\mathrm{g}^{A}\left(  \nabla_{X}^{A}(Y_{1}%
+Y_{2}),Z\right)  -\mathrm{g}^{A}\left(  Y_{1}+Y_{2},\nabla_{X}^{A}Z\right) \\
& =X\left[  \mathrm{g}^{A}(Y_{1},Z)+g^{A}(Y_{2},Z)\right]  -\mathrm{g}%
^{A}\left(  \nabla_{X}^{A}Y_{1}+\nabla_{X}^{A}Y_{2},Z\right)  -\mathrm{g}%
^{A}\left(  Y_{1},\nabla_{X}^{A}Z\right) \\
& -\mathrm{g}^{A}\left(  Y_{2},\nabla_{X}^{A}Z\right) \\
& =X\left[  g^{A}(Y_{1},Z)\right]  +X\left[  g^{A}(Y_{2},Z)\right]
-g^{A}\left(  \nabla_{X}^{A}Y_{1},Z\right)  -\mathrm{g}^{A}\left(  \nabla
_{X}^{A}Y_{2},Z\right) \\
& -\mathrm{g}^{A}\left(  Y_{1},\nabla_{X}^{A}Z\right)  -\mathrm{g}^{A}\left(
Y_{2},\nabla_{X}^{A}Z\right) \\
& =X\left[  \mathrm{g}^{A}(Y_{1},Z)\right]  -\mathrm{g}^{A}\left(  \nabla
_{X}^{A}Y_{1},Z\right)  -\mathrm{g}^{A}\left(  Y_{1},\nabla_{X}^{A}Z\right)
+X\left[  \mathrm{g}^{A}(Y_{2},Z)\right] \\
& -\mathrm{g}^{A}\left(  \nabla_{X}^{A}Y_{2},Z\right)  -\mathrm{g}^{A}\left(
Y_{2},\nabla_{X}^{A}Z\right) \\
& =\nabla_{X}^{A}\mathrm{g}^{A}\left(  Y_{1},Z)+\nabla_{X}^{A}\mathrm{g}%
^{A}(Y_{2},Z\right)  \text{.}%
\end{align*}

\item Let $Y$ and $Z$ the vector fields in $\mathfrak{X}(M^{A})$ and
$\varphi\in C^{\infty}(M^{A},A),$ we have:%
\begin{align*}
\nabla_{X}^{A}\mathrm{g}^{A}\left(  \varphi\cdot Y,Z\right)   & =X\left[
\mathrm{g}^{A}\left(  \varphi\cdot Y,Z\right)  \right]  -\mathrm{g}^{A}\left(
\nabla_{X}^{A}(\varphi\cdot Y),Z\right)  -\mathrm{g}^{A}\left(  \varphi\cdot
Y,\nabla_{X}^{A}Z\right) \\
& =X(\varphi)\cdot\mathrm{g}^{A}(Y,Z)+\varphi\cdot X\left[  \mathrm{g}%
^{A}(Y,Z)\right]  -\mathrm{g}^{A}\left(  X(\varphi)\cdot Y,Z\right)
+\varphi\cdot\mathrm{g}^{A}\left(  \nabla_{X}^{A}Y,Z\right) \\
& -\varphi\cdot\mathrm{g}^{A}\left(  Y,\nabla_{X}^{A}Z\right) \\
& =X(\varphi)\cdot\mathrm{g}^{A}(Y,Z)+\varphi\cdot X\left[  \mathrm{g}%
^{A}(Y,Z)\right]  -X(\varphi)\cdot\mathrm{g}^{A}(Y,Z)-\varphi\cdot
\mathrm{g}^{A}\left(  \nabla_{X}^{A}Y,Z\right) \\
& -\varphi\cdot\mathrm{g}^{A}\left(  Y,\nabla_{X}^{A}Z\right) \\
& =\varphi\cdot X\left[  \mathrm{g}^{A}(Y,Z)\right]  -\varphi\cdot
\mathrm{g}^{A}\left(  \nabla_{X}^{A}Y,Z\right)  -\varphi\cdot\mathrm{g}%
^{A}\left(  Y,\nabla_{X}^{A}Z\right) \\
& =\varphi\cdot\nabla_{X}^{A}\mathrm{g}^{A}(Y,Z).
\end{align*}

\end{enumerate}

Therefore, the map $\nabla_{X}^{A}\mathrm{g}^{A}$ is a symmetric $C^{\infty
}(M^{A},A)$-bilinear form.
\end{proof}

\begin{proposition}
If $\nabla$ is a linear connection on the pseudo-riemannian manifold
$(M,\mathrm{g})$, then we have:%
\[
\nabla_{\theta^{A}}^{A}\mathrm{g}^{A}\left(  \mu_{1}^{A},\mu_{2}^{A}\right)
=\left[  \nabla_{\theta}\mathrm{g}\left(  \mu_{1},\mu_{2}\right)  \right]
^{A}%
\]
for any $\theta,\mu_{1},\mu_{2}\in\mathfrak{X}(M^{A})$.
\end{proposition}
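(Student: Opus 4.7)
The plan is to expand the left-hand side using the defining formula for $\nabla_{X}^{A}\mathrm{g}^{A}$ from the preceding proposition, and then apply three prolongation identities already established in the paper: the identity $\theta^{A}(f^{A})=[\theta(f)]^{A}$ from the definition of $\theta^{A}$, the identity $\nabla_{\theta^{A}}^{A}\eta^{A}=(\nabla_{\theta}\eta)^{A}$ from the theorem characterizing $\nabla^{A}$, and the identity $\mathrm{g}^{A}(\eta^{A},\theta^{A})=[\mathrm{g}(\eta,\theta)]^{A}$ from the definition of $\mathrm{g}^{A}$. (Note the statement should read $\theta,\mu_{1},\mu_{2}\in\mathfrak{X}(M)$; otherwise $\theta^{A},\mu_{i}^{A}$ would make no sense.)

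The main calculation proceeds in the following order. First I would write
\[
\nabla_{\theta^{A}}^{A}\mathrm{g}^{A}(\mu_{1}^{A},\mu_{2}^{A})
=\theta^{A}\bigl[\mathrm{g}^{A}(\mu_{1}^{A},\mu_{2}^{A})\bigr]
-\mathrm{g}^{A}\bigl(\nabla_{\theta^{A}}^{A}\mu_{1}^{A},\mu_{2}^{A}\bigr)
-\mathrm{g}^{A}\bigl(\mu_{1}^{A},\nabla_{\theta^{A}}^{A}\mu_{2}^{A}\bigr).
\]
Next, I would substitute $\mathrm{g}^{A}(\mu_{1}^{A},\mu_{2}^{A})=[\mathrm{g}(\mu_{1},\mu_{2})]^{A}$ so that the first term becomes $\theta^{A}\bigl([\mathrm{g}(\mu_{1},\mu_{2})]^{A}\bigr)=[\theta(\mathrm{g}(\mu_{1},\mu_{2}))]^{A}$. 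For the other two terms I would replace $\nabla_{\theta^{A}}^{A}\mu_{i}^{A}$ by $(\nabla_{\theta}\mu_{i})^{A}$ and then apply the $\mathrm{g}^{A}$-identity again to obtain $[\mathrm{g}(\nabla_{\theta}\mu_{1},\mu_{2})]^{A}$ and $[\mathrm{g}(\mu_{1},\nabla_{\theta}\mu_{2})]^{A}$ respectively.

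Finally, using the additivity of the prolongation $f\mapsto f^{A}$ (i.e., $(f-g)^{A}=f^{A}-g^{A}$), I would collect the three $A$-prolongations into a single one, yielding
\[
\bigl[\theta(\mathrm{g}(\mu_{1},\mu_{2}))-\mathrm{g}(\nabla_{\theta}\mu_{1},\mu_{2})-\mathrm{g}(\mu_{1},\nabla_{\theta}\mu_{2})\bigr]^{A}=[\nabla_{\theta}\mathrm{g}(\mu_{1},\mu_{2})]^{A},
\]
which is the desired identity.

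I do not anticipate a genuine obstacle here: the result is a direct consequence of the compatibility of prolongations with the three operations involved (the Lie derivative of a function by a vector field, the covariant derivative of a vector field, and the metric pairing). The only care needed is to invoke each prolongation identity in exactly the right place, and to remember that $f\mapsto f^{A}$ is $\mathbb{R}$-linear so that the difference of three prolonged scalar functions on $M$ equals the prolongation of their difference.
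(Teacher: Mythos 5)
Your proposal is correct and follows essentially the same route as the paper's own proof: expand $\nabla_{\theta^{A}}^{A}\mathrm{g}^{A}(\mu_{1}^{A},\mu_{2}^{A})$ by the defining formula, then apply the three prolongation identities for $\theta^{A}$, $\nabla^{A}$ and $\mathrm{g}^{A}$ term by term, and collect using the additivity of $f\longmapsto f^{A}$. Your remark that the hypothesis should read $\theta,\mu_{1},\mu_{2}\in\mathfrak{X}(M)$ rather than $\mathfrak{X}(M^{A})$ is also well taken.
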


\begin{proof}
for any $\theta,\mu_{1},\mu_{2}\in\mathfrak{X}(M^{A})$, we have:
\begin{align*}
\nabla_{\theta^{A}}^{A}\mathrm{g}^{A}(\mu_{1}^{A},\mu_{2}^{A})  & =\theta
^{A}\left[  \mathrm{g}^{A}(\mu_{1}^{A},\mu_{2}^{A})\right]  -\mathrm{g}%
^{A}\left(  \nabla_{\theta^{A}}^{A}\mu_{1}^{A},\mu_{2}^{A}\right)
-\mathrm{g}^{A}\left(  \mu_{1}^{A},\nabla_{\theta^{A}}^{A}\mu_{2}^{A}\right)
\\
& =\theta^{A}\left[  (\mathrm{g}(\mu_{1},\mu_{2}))^{A}\right]  -[\mathrm{g}%
\left(  \nabla_{\theta}\mu_{1},\mu_{2}\right)  ]^{A}-[\mathrm{g}\left(
\mu_{1},\nabla_{\theta}\mu_{2}\right)  ]^{A}\\
=  & [\theta(\mathrm{g}(\mu_{1},\mu_{2}))]^{A}-[\mathrm{g}\left(
\nabla_{\theta}\mu_{1},\mu_{2}\right)  ]^{A}-[\mathrm{g}\left(  \mu_{1}%
,\nabla_{\theta}\mu_{2}\right)  ]^{A}\\
=  & [\theta\lbrack\mathrm{g}(\mu_{1},\mu_{2})]-\mathrm{g}(\nabla_{\theta}%
\mu_{1},\mu_{2})-\mathrm{g}(\mu_{1},\nabla_{\theta}\mu_{2})]^{A}\\
=  & [\nabla_{\theta}\mathrm{g}(\mu_{1},\mu_{2})]^{A}.
\end{align*}

\end{proof}

\begin{proposition}
For any $X,Y,Z\in\mathfrak{X}(M^{A})$, and if $U$ is coordinate neighborhood
of $M$, then
\[
\left[  \left(  \nabla_{|U^{A}}^{A}\right)  _{|U^{A}}\mathrm{g}_{|U^{A}}%
^{A})\right]  \left(  X_{|U^{A}},\,Y_{|U^{A}}\right)  =\left[  \nabla_{X}%
^{A}\mathrm{g}^{A}(Y,Z)\right]  _{|U^{A}}\text{.}%
\]

\end{proposition}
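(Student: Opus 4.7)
The plan is to reduce the identity to the locality of the objects involved: both $\mathrm{g}^{A}$ and, for fixed $X$, the map $\nabla_{X}^{A}\mathrm{g}^{A}$ are $C^{\infty}(M^{A},A)$-bilinear by the preceding propositions, hence tensorial and therefore local in their two vector-field slots. Only the locality of the operator $\nabla^{A}$ itself in each of its two arguments remains to be checked.

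First, I would record that since $(Y,Z)\mapsto\nabla_{X}^{A}\mathrm{g}^{A}(Y,Z)$ is $C^{\infty}(M^{A},A)$-bilinear, its value at $\xi\in U^{A}$ depends only on $Y(\xi)$ and $Z(\xi)$, so modifying $Y$ and $Z$ outside $U^{A}$ does not affect the right-hand side. To treat the role of $X$, I would prove the standard locality statement for the derivation law: if $X_{|V}=X'_{|V}$ for an open set $V\subset M^{A}$, then $(\nabla_{X}^{A}Y)_{|V}=(\nabla_{X'}^{A}Y)_{|V}$ for every $Y\in\mathfrak{X}(M^{A})$. The argument is the standard bump-function one: given $\xi_{0}\in V$, choose $\chi\in C^{\infty}(M^{A})$ supported in $V$ with $\chi\equiv 1$ near $\xi_{0}$; view $\chi$ as an element of $C^{\infty}(M^{A},A)$ through the inclusion $\mathbb{R}\hookrightarrow A$; apply the $C^{\infty}(M^{A},A)$-linearity of $X\mapsto\nabla_{X}^{A}$ to the equality $\chi\cdot X=\chi\cdot X'$; and evaluate at $\xi_{0}$.

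Once locality in the first slot is in hand, the analogous locality in the second slot follows from the Leibniz rule $\nabla_{X}^{A}(\varphi\cdot Y)=X(\varphi)\cdot Y+\varphi\cdot\nabla_{X}^{A}Y$ together with the same bump-function trick. Combining these two with the tensoriality of $\mathrm{g}^{A}$ and of $\nabla_{X}^{A}\mathrm{g}^{A}$, both sides of the claimed identity, evaluated at any $\xi\in U^{A}$, depend only on $X_{|U^{A}}, Y_{|U^{A}}, Z_{|U^{A}}$, and in fact both reduce to the very same expression
\[
X_{|U^{A}}\bigl[\mathrm{g}_{|U^{A}}^{A}(Y_{|U^{A}},Z_{|U^{A}})\bigr]-\mathrm{g}_{|U^{A}}^{A}\bigl(\nabla_{X_{|U^{A}}}^{A}Y_{|U^{A}},Z_{|U^{A}}\bigr)-\mathrm{g}_{|U^{A}}^{A}\bigl(Y_{|U^{A}},\nabla_{X_{|U^{A}}}^{A}Z_{|U^{A}}\bigr).
\]

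The main obstacle is the bump-function step performed inside the $A$-module formalism $\mathfrak{X}(M^{A})=\mathrm{Der}_{A}[C^{\infty}(M^{A},A)]$: the usual argument on a manifold uses $\mathbb{R}$-valued cutoffs, and one must ensure it remains valid when the coefficients are $A$-valued. The resolution is to employ an ordinary real bump $\chi\in C^{\infty}(M^{A})$ and use the canonical embedding $\mathbb{R}\hookrightarrow A$, which makes $\chi$ a scalar inside $C^{\infty}(M^{A},A)$ with respect to which the derivation law is linear. Once this point is settled, the identity is a direct unwinding of the defining formula for $\nabla_{X}^{A}\mathrm{g}^{A}$.
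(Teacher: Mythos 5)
The paper states this proposition with no proof at all, so there is nothing to compare your argument against; judged on its own, your proof is correct and is the standard way to establish the claim. The tensoriality of $\mathrm{g}^{A}$ and of $(Y,Z)\mapsto\nabla_{X}^{A}\mathrm{g}^{A}(Y,Z)$ (the preceding propositions in the paper) gives locality in the $Y,Z$ slots, the $C^{\infty}(M^{A},A)$-linearity in $X$ gives locality in $X$ via the cutoff argument, and the Leibniz rule gives locality of $\nabla_{X}^{A}$ in its second slot; you correctly note that the real-valued bump $\chi$ acts as a scalar in $C^{\infty}(M^{A},A)$ through $\mathbb{R}\hookrightarrow A$, which is the only point where the $A$-module formalism could have caused trouble (and paracompactness of $M$, hence of $M^{A}$, guarantees the bumps exist). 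One small addition would strengthen the write-up: for the left-hand side $\bigl(\nabla_{|U^{A}}^{A}\bigr)_{X_{|U^{A}}}\mathrm{g}_{|U^{A}}^{A}$ to be meaningful as an operator on $U^{A}$ — and in particular for the way the corollary later applies it to the purely local fields $\partial/\partial x_{i}^{A}$, which need not be restrictions of global fields — you should also remark that the locality you proved makes the induced local derivation law $\nabla_{|U^{A}}^{A}$ well defined on all local vector fields, by extending a local field with a bump function and checking independence of the extension. With that remark included, the identity is, as you say, a direct unwinding of the defining formula for $\nabla_{X}^{A}\mathrm{g}^{A}$.
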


\begin{corollary}
If $\nabla$ is the Levi-Civita connection on the pseudo-riemannian manifold
$(M,g)$, then we have:
\[
\nabla_{X}^{A}\mathrm{g}^{A}=0
\]
for any $X\in\mathfrak{X}(M^{A})$.
\end{corollary}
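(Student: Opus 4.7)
The plan is to reduce the global vanishing of $\nabla_X^A \mathrm{g}^A$ to the identity $\nabla \mathrm{g} = 0$ on $M$, via the local frame of coordinate prolongations together with the preceding proposition.

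First, I would observe that the map $(X, Y, Z) \mapsto (\nabla_X^A \mathrm{g}^A)(Y, Z)$ is $C^\infty(M^A, A)$-trilinear. Bilinearity in $(Y, Z)$ is the content of the proposition just above; linearity in $X$ follows because $X \mapsto \nabla_X^A$ is $C^\infty(M^A, A)$-linear by construction, while the term $X[\mathrm{g}^A(Y, Z)]$ is manifestly $C^\infty(M^A, A)$-linear in $X$. In particular, $\nabla_X^A \mathrm{g}^A(Y,Z)$ is localizable: its restriction to $U^A$ depends only on the restrictions of $X$, $Y$, and $Z$.

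Second, in a coordinate neighborhood $U$ of $M$ with coordinates $(x_1, \ldots, x_n)$, I would expand each of $X, Y, Z$ in the local frame $\{(\partial/\partial x_i)^A\}$ with coefficients in $C^\infty(U^A, A)$. By trilinearity, the restriction of $\nabla_X^A \mathrm{g}^A(Y,Z)$ to $U^A$ is a $C^\infty(U^A, A)$-linear combination of the quantities
\[
\nabla_{(\partial/\partial x_i)^A}^A \mathrm{g}^A\bigl((\partial/\partial x_j)^A,\, (\partial/\partial x_k)^A\bigr),
\]
so it is enough to establish the vanishing of each such expression. Applying the preceding proposition with $\theta = \partial/\partial x_i$, $\mu_1 = \partial/\partial x_j$, $\mu_2 = \partial/\partial x_k$ gives
\[
\nabla_{(\partial/\partial x_i)^A}^A \mathrm{g}^A\bigl((\partial/\partial x_j)^A,\, (\partial/\partial x_k)^A\bigr) = \bigl[\nabla_{\partial/\partial x_i}\mathrm{g}\bigl(\partial/\partial x_j,\, \partial/\partial x_k\bigr)\bigr]^A,
\]
and since $\nabla$ is the Levi-Civita connection we have $\nabla\mathrm{g} = 0$, so this bracketed expression vanishes identically and so does its prolongation. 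The restriction proposition then glues the local vanishings to the global conclusion $\nabla_X^A \mathrm{g}^A = 0$.

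The step I expect to require the most care is not conceptual but the verification of $C^\infty(M^A, A)$-trilinearity, specifically the linearity in the first slot: one must check that the derivation term $X[\mathrm{g}^A(Y,Z)]$ rescales correctly when $X$ is replaced by $\varphi\cdot X$, so that every coefficient really factors out of the trilinear expression and the reduction to the coordinate frame is legitimate. Everything else is a direct invocation of earlier statements.
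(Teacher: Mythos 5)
Your proposal is correct and follows essentially the same route as the paper: localize to $U^A$, expand $X$, $Y$, $Z$ in the frame $\left\{\left(\dfrac{\partial}{\partial x_{i}}\right)^{A}\right\}$, pull the coefficients out by $C^{\infty}(M^{A},A)$-multilinearity, reduce to $\left[\nabla_{\partial/\partial x_{i}}\mathrm{g}\left(\partial/\partial x_{j},\partial/\partial x_{k}\right)\right]^{A}$ via the prolongation identity, and conclude from $\nabla\mathrm{g}=0$. Your explicit attention to tensoriality in the first slot is a point the paper leaves implicit, but it is the same argument.
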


\begin{proof}
Let $X,Y,Z$ be vector fields $M^{A}$ and $U$ a coordinate neighborhood of
$M^{A}$. Then
\[
X_{|U^{A}}=\sum_{i=1}^{n}f_{i}\dfrac{\partial}{\partial x_{i}^{A}}%
;\,Y_{|U^{A}}=\sum_{j=1}^{n}g_{j}\dfrac{\partial}{\partial x_{j}^{A}%
};\,Z_{|U^{A}}=\sum_{k=1}^{n}h_{k}\dfrac{\partial}{\partial x_{k}^{A}}.
\]
Thus, we have:
\begin{align*}
\left[  \nabla_{X}^{A}\mathrm{g}^{A}(Y,Z)\right]  _{|U^{A}}  & =\left[
(\nabla_{|U^{A}}^{A})_{X_{|U^{A}}})\mathrm{g}_{{|U^{A}}}^{A}\left(  Y_{|U^{A}%
},Z_{|U^{A}}\right)  \right]  \\
& =\left(  (\nabla_{|U^{A}}^{A})_{\left(  \sum_{i=1}^{n}f_{i}\dfrac{\partial
}{\partial x_{i}^{A}}\right)  }\mathrm{g}_{{|U^{A}}}^{A}\right)  \left(
\sum_{j=1}^{n}g_{j}\dfrac{\partial}{\partial x_{j}^{A}},\sum_{k=1}^{n}%
h_{k}\dfrac{\partial}{\partial x_{k}^{A}}\right)  \\
& =\sum_{ijk=1}^{n}f_{i}g_{j}h_{k}\left(  (\nabla_{|U^{A}}^{A})_{\left(
\dfrac{\partial}{\partial x_{i}^{A}}\right)  }\mathrm{g}_{{|U^{A}}}%
^{A}\right)  \left(  \dfrac{\partial}{\partial x_{j}^{A}},\dfrac{\partial
}{\partial x_{k}^{A}}\right)  \\
& =\sum_{ijk=1}^{n}f_{i}g_{j}h_{k}\left(  (\nabla_{|U^{A}}^{A})_{\left(
\dfrac{\partial}{\partial x_{i}}\right)  ^{A}}\mathrm{g}_{|U^{A}}^{A}\right)
\left(  \left(  \dfrac{\partial}{\partial x_{j}}\right)  ^{A},\left(
\dfrac{\partial}{\partial x_{k}}\right)  ^{A}\right)  \\
& =\sum_{ijk=1}^{n}f_{i}g_{j}h_{k}\left(  \left(  (\nabla_{|U})_{\left(
\dfrac{\partial}{\partial x_{i}}\right)  }\mathrm{g}_{|U}\right)  \left(
\dfrac{\partial}{\partial x_{j}},\dfrac{\partial}{\partial x_{k}}\right)
\right)  ^{A}\text{.}%
\end{align*}

As $\nabla$ is the Levi-Civita connection, then $\nabla_{\theta}\mathrm{g}=0$,
hence $\left[  \nabla_{X}^{A}\mathrm{g}^{A}(Y,Z)\right]  _{|U^{A}}=0$. It
follows that,
\[
\nabla_{X}^{A}\mathrm{g}^{A}=0.
\]

\end{proof}

\begin{theorem}
If $\nabla$ is a Levi-Civita connection on a pseudo-riemannian manifold
$(M,\mathrm{g})$, then $\nabla^{A}$ verifies the following properties:

\begin{enumerate}
\item $T_{\nabla^{A}}=0$;

\item $\nabla_{X}^{A}\mathrm{g}^{A}=0$ for any $X\in\mathfrak{X}(M^{A})$.
\end{enumerate}
\end{theorem}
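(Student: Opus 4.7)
The plan is to observe that this theorem is a direct packaging of the two corollaries established in the immediately preceding subsection, applied to the defining features of the Levi-Civita connection. Recall that a Levi-Civita connection on $(M,\mathrm{g})$ is uniquely characterized by two properties: it is torsion-free, i.e.\ $T_{\nabla}=0$, and it is metric-compatible, i.e.\ $\nabla_{\theta}\mathrm{g}=0$ for every $\theta\in\mathfrak{X}(M)$. So the proof strategy is simply to dispatch each of the two claims by invoking the corresponding corollary proved above.

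For part (1), I would invoke the corollary asserting that if $\nabla$ is torsion-free then so is $\nabla^{A}$. Since the Levi-Civita connection satisfies $T_{\nabla}=0$, this immediately yields $T_{\nabla^{A}}=0$. No further computation is required, since the local argument with the coordinate vector fields $\left(\partial/\partial x_{i}\right)^{A}$ has already been carried out in the corollary's proof.

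For part (2), I would invoke the corollary that states that if $\nabla$ is the Levi-Civita connection, then $\nabla_{X}^{A}\mathrm{g}^{A}=0$ for any $X\in\mathfrak{X}(M^{A})$. This follows by a parallel local computation: writing $X,Y,Z$ in a coordinate neighborhood $U$ in terms of $\partial/\partial x_{i}^{A}=\left(\partial/\partial x_{i}\right)^{A}$ and reducing via the preceding proposition to expressions of the form $\left[\left(\nabla_{\partial/\partial x_{i}}\mathrm{g}\right)\left(\partial/\partial x_{j},\partial/\partial x_{k}\right)\right]^{A}$, which all vanish since $\nabla\mathrm{g}=0$.

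The main obstacle, if any, is purely bookkeeping: one must be sure the two corollaries apply jointly to the same $\nabla^{A}$, but this is automatic since $\nabla^{A}$ is the unique prolongation provided by Theorem~8. Thus the proof reduces to citing the two earlier corollaries in sequence.
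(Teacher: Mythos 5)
Your proposal matches the paper's proof exactly: the paper likewise deduces the theorem by citing the two preceding corollaries (the one propagating torsion-freeness to $\nabla^{A}$ and the one establishing $\nabla_{X}^{A}\mathrm{g}^{A}=0$ for the Levi-Civita connection). No gap; same approach.
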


\begin{proof}
The proof is deduced from the corollary \ref{pp1} and corollary \ref{pp5}.
\end{proof}

Thus $\nabla^{A}$ is a Levi-Civita connection on the pseudo-riemannian
manifold $(M^{A},\mathrm{g}^{A})$.\newline

\begin{acknowledgement}
: The first author thanks Deutscher Akademischer Austauschdientst (DAAD) for
their financial support.
\end{acknowledgement}

\end{document}